\def\subsection{\@startsection{subsection}{2}%
  \z@{.5\linespacing\@plus.7\linespacing}
% make this >0 for display heading
{.5\baselineskip}%
% add centering
  {\normalfont\centering\scshape}%
}
\numberwithin{equation}{section}
\def\R{\mathbb{R}}
\def\N{\mathbb{N}}
\def\r3{\R^3}
\newcommand{\ben}{\begin{eqnarray*}}
\newcommand{\een}{\end{eqnarray*}}
\newtheorem{theorem}{Theorem}
\newtheorem{proposition}{Proposition}
\newtheorem*{proposition*}{Proposition}
\newtheorem{lemma}{Lemma}
\newtheorem{remark}{Remark}
\begin{document}

\title[Contractivity of Wasserstein distance and other functionals]{
  Contractivity of Wasserstein distance and other functionals for the Landau equation
  }

\author{F.-U. Caja-Lopez, M. G. Delgadino, M.-P. Gualdani, M. Taskovic}

\address{Department of Mathematics, The University of Texas at Austin (USA)}
\email{funai.caja@utexas.edu}
\email{matias.delgadino@utexas.edu}
\email{gualdani@math.utexas.edu}
\address{Department of Mathematics, Emory University (USA)}
\email{maja.taskovic@emory.edu}

\begin{abstract}
  Following the breakthrough work of Guillen and Silvestre \cite{GS24}, that shows that the Fisher information is monotonically decreasing for solutions to the homogeneous Landau equation, 
 % the well-posedness of the spatially homogeneous Landau equation with Coulomb interactions for smooth initial data by means of the monotonicity of the Fisher information, 
  we study, for the same equation,  the monotonicity properties of other physically relevant functionals. In the case of Maxwellian molecules, we show that the relative $L^2$ norm with respect to the equilibrium decays exponentially fast in time and is monotonically decreasing after some time. Moreover, still for the Maxwellian case, we provide a novel and short quantitative proof of time monotonicity of the  entropic Wasserstein metric. 
  
  For soft potentials, we show that the Wasserstein metric is contractive, conditional to $L^1(0,T,L^p(\mathbb{R}^3))$ bound for the solution. This result provides an alternative proof of the   Fournier and  Fournier-Guerin uniqueness theorem in  \cite{fournier2009well_posedness_soft_potentials}  \cite{fournier2010uniqueness_Coulomb} . 
  
  %provide an analytical proof of the classical well posedness estimates of Fournier-Guerin \cite{fournier2009well_posedness_soft_potentials}, and Fournier \cite{fournier2010uniqueness_Coulomb}, which were initially obtained by probabilistic methods.
\end{abstract}

\keywords{Landau equation, Maxwellian  molecules, entropic Wasserstein distance, relative $L^2$-norm,  exponential decay, contractivity.}

\maketitle

% \tableofcontents

\section{Introduction}
In this paper, we consider the space-homogeneous Landau equation
\begin{equation}\label{L}
  \partial_t f = \textnormal{div}_{v}\int_{\mathbb{R}^{d}}
  \alpha(\vert v-w\vert )\vert v-w\vert^2 K(v-w)\left(f(w)\nabla_{v}f(v)-f(v)\nabla_{w}f(w)\right)\,dw,
\end{equation}
with $  K(z) := \textnormal{Id} - \frac{z\otimes z}{\vert z \vert^2}$ being the orthogonal projection to $\langle z\rangle^\perp$ and $\alpha:\mathbb{R}_+\longrightarrow\mathbb{R}_+$. 
The most relevant physical case is $d=3$ and $\alpha(r)=r^{-3}$, which corresponds to Coulomb interactions of electrons, and arises in mathematical modeling of electrons in plasma.  The Landau equation is one of the most used mathematical models in plasma physics. From a mathematical point of view, the presence of  nonlinear and non-local terms makes the analysis quite challenging. After decades of being an open problem, well posedness of \eqref{L} with smooth data has  been recently settled in the breakthrough work of Guillen-Silvestre \cite{GS24}. The proof relies on a novel a-priori estimate which shows that the Fisher information is non-increasing in time. Subsequently, global well-posedness for less regular initial data was shown in \cite{GoGuLo24, DGL24, Ji24, HJJ2025}.  The surprising monotonicity of the Fisher information hints at the possibility of additional functionals that either decay monotonically or stay bounded. 

In a preliminary attempt to answer this question, we focus here on three specific functionals. The first one is the relative $L^2$ distance between any solution to (\ref{L}) and the Maxwellian equilibrium. The other two are the classical $2$-Wasserstein distance and the  entropic Wasserstein distance between two solutions to (\ref{L}). We summarize our main results in three theorems.

The first theorem shows that in the case of Maxwell molecules  ($\alpha\equiv 1$), the relative $L^2$ distance decays exponentially fast in time.  Moreover, this distance is monotonically decreasing  after a short initial time. This transient period {\em{measures}} how far the initial data is from a radially symmetric configuration. If the initial data is {\em{almost}} radially symmetric, the relative $L^2$ norm is monotone decreasing for all times. 

\begin{theorem}\label{thm:rel_L2_monotone}
  Let $f$ be a solution to (\ref{L})  with $\alpha \equiv 1$ and initial data $f_{in}$ such that 
  $ \int f_{in} (1, v, |v|^2) \;dv = (1,0,d)$.   Let $m$ be the corresponding unique equilibrium solution, $m(v) :=(2\pi)^{-\frac{d}{2}}e^{-\frac{1}{2}\vert{v}\vert^{2}}$. Then the relative $L^2$-norm 
  % then the following functional is decreasing
  \begin{equation*}
 E(t):=   \int_{\R^d}\left(\frac{f}{ m }-1\right)^{2} m \;dv
  \end{equation*}
 decays exponentially fast  in time, 
 $$
 E(t) \le c_0 e^{-2(d-1)t}E(0),
 $$
 where $c_0$ depends only on the dimension $d$. Moreover, $E(t)$ is monotonically decreasing for any $t \ge  t_0$ with $t_0 
 = \max\left\{\frac{1}{4d}\log\left(\frac{2\sup_i (\lambda_i^{in}-1)}{d-1}\right),0\right\}$, where $\lambda_i^{in}$ are the eigenvalues of the matrix $\mathbb{P}^{in}_{ij}=\int v_iv_jf_{in}\,dv$.
 \end{theorem}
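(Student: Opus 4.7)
The plan exploits the algebraic structure peculiar to Maxwell molecules. A direct calculation gives $\bar A(v) := \int a(v-w)f(w)\,dw = (|v|^2+d)\,\textnormal{Id} - v\otimes v - \lambda(t)$, where $a(z) = |z|^2\,\textnormal{Id} - z\otimes z$ and $\lambda_{ij}(t) := \int v_iv_j f\,dv$, together with $\textnormal{div}_z\,a(z) = -(d-1)z$. Substituting $f = mh$ with $h = f/m$ and using $\bar A v = (d\,\textnormal{Id}-\lambda)v$, the Landau operator collapses to $Q(f,f) = \nabla\cdot[m(\bar A\nabla h + (\lambda-\textnormal{Id})vh)]$. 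Multiplying by $h$, integrating by parts, and using the identity $\nabla\cdot[m(\lambda-\textnormal{Id})v] = -mv\cdot(\lambda-\textnormal{Id})v$ (a consequence of $\textnormal{tr}(\lambda-\textnormal{Id})=0$) yields the dissipation identity
\[
\frac{d}{dt}E = -2\int m\,\bar A\nabla h\cdot\nabla h\,dv - \int h^2\,m\,v\cdot(\lambda-\textnormal{Id})v\,dv.
\]
In parallel, testing the equation against $v_iv_j$ and using $\textnormal{div}(\bar A) = -(d-1)v$ produces a closed linear ODE for the second moment, $\tfrac{d}{dt}(\lambda-\textnormal{Id}) = -4d(\lambda-\textnormal{Id})$, so that $\lambda(t)-\textnormal{Id} = e^{-4dt}(\lambda^{\textnormal{in}}-\textnormal{Id})$; this explicit rate $4d$ is precisely what defines the threshold $t_0$ in the statement.

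To extract the rate $2(d-1)$ I split $\bar A = \bar A_0 - (\lambda-\textnormal{Id})$ with $\bar A_0 := (|v|^2+d-1)\,\textnormal{Id} - v\otimes v$. The pointwise inequality $(v\cdot\nabla h)^2 \le |v|^2|\nabla h|^2$ gives $\bar A_0 \ge (d-1)\,\textnormal{Id}$, and since $\int hm = 1$ by mass conservation, the Gaussian Poincaré inequality yields
\[
\int m\,\bar A_0\nabla h\cdot\nabla h\,dv \ge (d-1)\int m|\nabla h|^2\,dv \ge (d-1)\,E.
\]
Combined with the algebraic identity $\int h^2 m\,v\cdot(\lambda-\textnormal{Id})v\,dv = 2\|\lambda-\textnormal{Id}\|_F^2 + \int(h-1)^2 m\,v\cdot(\lambda-\textnormal{Id})v\,dv$ (which follows from $\int(h-1)v_iv_j m\,dv = \lambda_{ij}-\delta_{ij}$ and $\textnormal{tr}(\lambda-\textnormal{Id})=0$), one arrives at $-\tfrac{d}{dt}E \ge 2(d-1)E + R(t)$, where $R(t)$ collects the remainders carrying one factor of $\lambda-\textnormal{Id}$. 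The moment bound $\|\lambda(t)-\textnormal{Id}\|_{op}\le e^{-4dt}\|\lambda^{\textnormal{in}}-\textnormal{Id}\|_{op}$ then gives $|R(t)|\le C\|\lambda(t)-\textnormal{Id}\|_{op}$ times time-uniform norms of $f$; a Gronwall argument absorbs the error and yields $E(t)\le c_0\,e^{-2(d-1)t}E(0)$ with $c_0$ depending only on $f_{\textnormal{in}}$ and $d$.

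For the monotonicity part, once $t\ge t_0$ the definition of $t_0$ guarantees $\sup_i(\lambda_i(t)-1)\le (d-1)/2$; arguing in the eigenbasis of $\lambda$ gives the pointwise bound $\bar A\ge (d-\lambda_{\max})\,\textnormal{Id}\ge\tfrac{d-1}{2}\,\textnormal{Id}$, and the resulting dissipation $2\int m\,\bar A\nabla h\cdot\nabla h\ge (d-1)E$ is strong enough to absorb the sign-indefinite term $\int h^2 m\,v\cdot(\lambda-\textnormal{Id})v$ once one extracts the favourable summand $2\|\lambda-\textnormal{Id}\|_F^2\ge 0$, giving $\tfrac{d}{dt}E\le 0$. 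The main obstacle I foresee is a sharp control of the weighted quantity $\int(h-1)^2 m|v|^2\,dv$ hiding inside $R$: one needs to bound it by $E$ up to constants depending only on the initial moments of $f$, which I expect to require either a weighted Poincaré inequality adapted to $\bar A_0$ or the monotonicity of the Fisher information established in \cite{Villani_Fisher}. Making this absorption sharp enough to match the threshold $(d-1)/2$ in the definition of $t_0$ exactly is the delicate part.
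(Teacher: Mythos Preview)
Your framework is essentially the paper's: the same dissipation identity, the same moment ODE $(\lambda-\textnormal{Id})' = -4d(\lambda-\textnormal{Id})$, the same splitting of $\bar A$, and the same use of Gaussian Poincar\'e on the leading $(d-1)$ term. You have also correctly identified the algebraic identity that trades $\int h^2 m\,v\cdot(\lambda-\textnormal{Id})v$ for $\int(h-1)^2 m\,v\cdot(\lambda-\textnormal{Id})v + 2\|\lambda-\textnormal{Id}\|_F^2$.

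The genuine gap is exactly the one you flag at the end, and it is not resolved by invoking ``time-uniform norms of $f$'' or Fisher information. The quantity $\int (h-1)^2 m\, v_i^2\,dv$ cannot be bounded by $E$ alone; it is a weighted $L^2$ norm that is strictly stronger. The paper closes this with a \emph{directional} weighted Poincar\'e inequality (proved via Hermite expansion and the identity $x\,\textnormal{He}_n = \textnormal{He}_{n+1} + \textnormal{He}_n'$):
\[
\int v_i^2\,\varphi^2\, m\,dv \;\le\; 2\int |\partial_{v_i}\varphi|^2\, m\,dv \;+\; 2\int \varphi^2\, m\,dv.
\]
Applied with $\varphi = h-1$, this feeds the bad weighted term back into the $i$-th component of the dissipation $\int|\partial_{v_i}h|^2 m$ plus a multiple of $E$. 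The paper then argues coordinate by coordinate: for indices with $\lambda_i^{\textnormal{in}}\ge 1$ both contributions already have the good sign; for indices with $\lambda_i^{\textnormal{in}}<1$ the inequality above is invoked. Summing, using $\sum_i(\lambda_i-1)=0$, and Gaussian Poincar\'e gives a closed differential inequality $E'\le -2\bigl(d-1-2e^{-4dt}\sup_i(\lambda_i^{\textnormal{in}}-1)\bigr)E$, from which both the exponential bound and the monotonicity for $t\ge t_0$ follow directly.

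Your monotonicity argument as written is also incomplete: the bound $\bar A\ge \tfrac{d-1}{2}\,\textnormal{Id}$ together with Poincar\'e gives dissipation $\ge (d-1)E$, but this by itself cannot absorb $\int(h-1)^2 m\,v\cdot(\lambda-\textnormal{Id})v$, which is not controlled by $E$. The same directional weighted Poincar\'e is what makes the absorption work, and the factor $2$ in that inequality is precisely what produces the threshold $\tfrac{d-1}{2}$ in the definition of $t_0$.
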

{\bf{Remark.}}
If $f_{in}$ satisfies $\lambda_i^{in}\leq \frac{d+1}{2}$ for all $i = 1,...,d$,  then $t_0=0$, namely the relative $L^2$ norm decreases from the initial time. In particular this applies to radially symmetric initial data, which have ${\lambda_i^{in}=1}$.

Our second result shows that, still for the Maxwell molecules case, the Landau equation is contractive with respect to the entropic $2$-Wasserstein distance. Before we define the distance, let's recall the definition of Boltzmann entropy $\mathcal H(f)$ and relative entropy $\mathcal H(f|g)$ between two probability measures $f$ and $g$ as 
$$
\mathcal H(f) := \int f\log f \;dx,
\qquad
\mathcal H(f|g) := \int \left( \frac{f}{g}\log \frac{f}{g} -\frac{f}{g} +1\right) g\;dx .
$$

The entropic $2$-Wasserstein distance is defined as
\begin{equation}\label{eq:defn_W2}
  \mathcal T_\varepsilon(\mu,\nu) = \inf_{\Pi\in \Gamma(\mu,\nu)} 
  \left\{ \iint|v-w|^2 \, d\Pi(v,w) + \varepsilon\mathcal H(\Pi|\mu\otimes \nu)\right\},
\end{equation}
where $\mu,\nu$ are two probability measures over $\mathbb{R}^d$, and $\Gamma(\mu,\nu)$ stands for the set of transport plans, which are probability measures over $\mathbb{R}^d\times \mathbb{R}^d$ with marginals $\mu$ and $\nu$, see \cite{gigli2020benamou} for further details. Before we state our second main theorem, we recall a useful expression introduced in \cite{GS24}  for the matrix $\vert v-w\vert^2 K(v-w)$: 
$$
\vert v-w\vert^2 K(v-w) = \sum_{k=1}^3 b_k(v-w) \otimes  b_k(v-w),
$$
with 
  \begin{equation*}
    b_{1}(v-w):=\begin{pmatrix}0\\
      w_{3}-v_{3}\\
      v_{2}-w_{2}
      \end{pmatrix},\; b_{2}(v-w):=\begin{pmatrix}v_{3}-w_{3}\\
      0\\
      w_{1}-v_{1}
      \end{pmatrix},\; b_{3}(v-w):=\begin{pmatrix}w_{2}-v_{2}\\
      v_{1}-w_{1}\\
      0
      \end{pmatrix}.
  \end{equation*}
In the following of the manuscript we denote by $\tilde b_k$ the vector 
$$
      \tilde b_k := \begin{pmatrix}
        b_k \\ -b_k
      \end{pmatrix} . 
      $$ %

Our second main result is summarized in the following theorem:

\begin{theorem}\label{thm:w2_dissipation}
  Let $f$, $g$ be solutions to (\ref{L}) for $\alpha \equiv 1$ in $\mathbb{R}^3$. Then $\mathcal T_\varepsilon (f,g)$ is non-increasing for any $\varepsilon\geq 0$. Moreover, the dissipation can be expressed as 
  \begin{equation}\label{eq:w2_dissipation2}
    \frac{d}{dt}\mathcal T_\varepsilon(f,g) = \varepsilon \underbrace{\frac{d}{dt} \mathcal H(f)}_{\le 0} - \frac{1}{2}\sum_k\int_0^1\iint_{\mathbb{R}^3\times\mathbb{R}^3} \left|\nabla(\tilde b_k\cdot \nabla U)\right|^2 P \,dv\,dw\,ds\le 0,
  \end{equation}
  where $P:[0,1]\to\mathcal{P}(\mathbb{R}^3\times\mathbb{R}^3)$ is the Wasserstein geodesic between $F(v,w)=f(v)f(w)$ and $G(v,w)=g(v)g(w)$ with vector field given by ${\nabla U:[0,1]\to L^2(\mathbb{R}^3\times\mathbb{R}^3,dP)^{\otimes 6}}$. 
\end{theorem}

The Geodesic $P$ solves a continuity equation with $\nabla U$ as the driving vector field, and $U$ solves a Hamilton-Jacobi-Bellman equation. See Section \ref{sec:w2_method2} for the precise statements and explicit expression of $U$.

We provide two different proofs of the monotonicity of $\mathcal T_\varepsilon$. The first one closely follows the symmetrization argument introduced in  \cite{GS24} (see Section \ref{sec:monotonicity_W2}). The second proof relies on the duality between entropic optimal transport and Hamilton-Jacobi-Bellman equations; thanks to the duality argument, we are able to find the explicit expression of the dissipation (\ref{eq:w2_dissipation2}). This approach is developed in Section \ref{sec:w2_method2}

%Let's recall that time monotonicity of the Wasserstein distance $\mathcal T$ for Maxwelliam molecules was known; Fournier and Guillin [5] proved convergence of a stochastic particle system to the solution to the Landau equation and time decay of the
%Wasserstein distance and uniqueness of weak solution. A similar result was proven by Carrapatoso in [6]. Our work provides two alternative and shorter proofs and, most importantly, an explicit formula for the dissipation. 

We remark that the contractivity of the 2-Wassertein distance $\mathcal T$ for Maxwellian molecules was already known \cite{Fournier_Guillin,Carrapatoso}. Even more classically, we mention the work of Tanaka \cite{Tanaka}, which first showed the monotonicity of the 2-Wasserstein metric for the Boltzmann equation. We also mention \cite{CarrilloDelgadinoDesvillettesWu2024} which produces a tailored 2-Wasserstein like metric on which the Landau equation arises as the gradient flow of the entropy;  understanding if the Landau equation is contractive in this natural metric is an interesting open problem. Here we provide a shorter and more intuitive proof of the monotonicity of $\mathcal T$. The expression of the dissipation on the right hand side of (\ref{eq:w2_dissipation2}) is at the authors' best  knowledge new.

%Here, in addition to the monotinicity, we provide the explicit expression for the dissipation in terms of the associated geodesic. 

The last main result concerns soft potentials and the contractivity of the $2$-Wasserstein  $ \mathcal T(f_{t},g_{t})$ distance. Recall that $ \mathcal T(f_{t},g_{t})$ is defined in (\ref{eq:defn_W2}) with $\varepsilon =0$.

%the case of soft potentials. The second method appears to be more flexible, as it allows us to also deal with more general weights $\alpha$. In fact, we are able to deduce the following contractive estimate for soft potentials, which matches the now classical results of Fournier and Guerin  \cite{fournier2009well_posedness_soft_potentials} and Fournier \cite{fournier2010uniqueness_Coulomb}, which were proven using completely different methods based on stochastic processes.
% 
\begin{theorem}\label{thm:w2_estimate_non_Coulomb}
  Let $f_t$, $g_t$ be two smooth solutions to (\ref{L}) in $\mathbb R^3$ with $\alpha(r)=r^{\gamma}$. 
  
  $(i)$   Let $-3<\gamma <0$. Assume that $f_t,g_t\in L_t^{1}L_{v}^{p}$ for $p>\frac{3}{3+\gamma}$. 
  
 There exist constants  $C_{\gamma,p}>0$ and $0<\beta_{\gamma,p}<1$ such that
  \begin{equation}\label{eq:contrac_estimate_non_coulomb}
    \mathcal T(f_{t},g_{t})\leq \mathcal T(f_{in},g_{in})\exp\left(C_{\gamma,p}\int_{0}^{t}\left(\left\Vert f_{s}\right\Vert _{L^{p}(\mathbb{R}^{3})}^{\beta_{\gamma,p}}+\left\Vert g_{s}\right\Vert _{L^{p}(\mathbb{R}^{3})}^{\beta_{\gamma,p}}\;ds\right)\right) .
  \end{equation}

 $(i)$ Let  $\gamma=-3$. Assume $f_t,\; g_t\in L_{t}^{1}L_{v}^{\infty}$, and  initial data such that  $\mathcal{T}(f_{in},g_{in})<e^{-4}$. Then there exist constants $C>0$ and $\tau_0>0$ such that for $t<\tau_0$ : 
  % %
  % \begin{equation*}
  %   \frac{d}{dt}\mathcal{T}(f,g)\leq
  %   \begin{cases}
  %     -C\max\left(\left\Vert f\right\Vert _{L^{\infty}(\mathbb{R}^{3})}+\left\Vert g\right\Vert _{L^{\infty}(\mathbb{R}^{3})},1\right)\mathcal{T}(f,g)\log\mathcal{T}(f,g), 
  %     & \textnormal{if }\mathcal{T}(f,g)\leq e^{-4}\\
  %     C\mathcal{T}(f,g)^{\frac{1}{4}}, & \textnormal{if }\mathcal{T}(f,g)>e^{-4}
  %   \end{cases}
  % \end{equation*}
  % %
  % In particular, if we assume that for $0\leq t\leq T$ it holds that $\mathcal T(f_{t},g_{t})\leq e^{-4}$. Then 
  % %
  \begin{equation}\label{eq:contract_estimate_coulomb}
    % \mathcal T(f_{t},g_{t})\leq\mathcal T(f_{in},g_{in})^{\exp\left(-C\int_{0}^{t}\max\big(\left\Vert f_{s}\right\Vert _{L^{\infty}(\mathbb{R}^{3})}+\left\Vert g_{s}\right\Vert _{L^{\infty}(\mathbb{R}^{3})},1\big)\,ds\right)}.
    \mathcal{T}(f_{t},g_{t})\leq\mathcal{T}(f_{in},g_{in})^{e^{-\beta(t)}},\quad\beta(t):=C\int_{0}^{t}\max\big(\left\Vert f_{s}\right\Vert _{L^{\infty}(\mathbb{R}^{3})}+\left\Vert g_{s}\right\Vert _{L^{\infty}(\mathbb{R}^{3})},1\big)\,ds.
  \end{equation}
\end{theorem}

The contractive estimates for soft potentials of Theorem \ref{thm:w2_estimate_non_Coulomb}  match the  estimates of Fournier and Guerin  \cite{fournier2009well_posedness_soft_potentials} and Fournier \cite{fournier2010uniqueness_Coulomb}. Indeed, our result shows that uniqueness of solution for (\ref{L}) for Coulomb potential holds provided the solution belongs to $L^1(0,\tau,L^\infty(\mathbb{R}^3))$ for small $\tau>0$. Our proof does not rely on stochastic methods, but solely uses the symmetrization properties of the distance $\mathcal{T}$ and of the Landau operator.  

It turns out that Theorem \ref{thm:w2_estimate_non_Coulomb} holds for a wider range of potentials. More precisely, it holds for any general  potential  $\alpha(r) $ that satisfies %general function satisfying 
$$
\alpha'(r)^2\leq Kr^{\gamma-2}\alpha(r),  %\quad \textrm{for $ \gamma \in [-3,0],$}
$$
for some positive constant $K$ and some $\gamma \in [-3,0]$.

\noindent \textbf{Organization of the paper.}  The rest of the paper is organized as follows.  Section \ref{sec:collision_operator_MM} collects some important properties  of the Landau operator with Maxwellian molecules. Section \ref{sec:rel_L2} contains the proof of Theorem \ref{thm:rel_L2_monotone}.  Theorem \ref{thm:w2_dissipation} is proven using convexity properties in Section \ref{sec:monotonicity_W2}   and via duality arguments in  Section \ref{sec:w2_method2}.  Finally, Section \ref{sec:proof_w2_estimate_non_Coulomb} contains the proof of Theorem~\ref{thm:w2_estimate_non_Coulomb}.

%%%%%%%%%%%%%%%%%%%%%%%%%%%%%%%%%%%%%%%%%%%%%%%%%%%%%%%%%%%%%%%%%%%%%%%%%%%
%%%%%%%%%%%%%%%%%%%%%%%%%%%%%%%%%%%%%%%%%%%%%%%%%%%%%%%%%%%%%%%%%%%%%%%%%%%
\section{The collision operator for Maxwellian molecules}\label{sec:collision_operator_MM}

The proof of Theorem \ref{thm:rel_L2_monotone} ($\alpha \equiv 1$) starts with the Landau equation (\ref{L})  rewritten in the following form:
%by rewriting the Landau collision operator as perturbation of a Fokker-Planck equation: 
% 
\begin{equation}\label{eq:Landau_MM_linear}
  \partial_t f = \textnormal{div}\left( 
      \left[ \left\vert v \right\vert ^{2}K(v) + d\, \textnormal{Id}  - \mathbb{P}(t) \right]\nabla f
      + (d-1)vf
     \right),
\end{equation}
where $\mathbb P(t)$ is the diagonal matrix formed by
\begin{equation}\label{eq:lambda_i}
  \lambda_i(t) = \int v_i^2f(v)\,dv,\qquad \lambda_i(t) = 1 + e^{-4dt}(\lambda_i(0)-1).
\end{equation}
% 
% In the radial case $\mathbb P(t)=\textnormal{Id}$ and $K(v)\nabla f(|v|)=0$,  (\ref{eq:Landau_MM_linear}) reduces to the standard Fokker-Planck
% % 
% \begin{equation*}
%   \partial_{t}f=(d-1)\textnormal{div}\left(\nabla f(v)+vf(v)\right) 
%   = (d-1)\textnormal{div}\left(m \nabla(f/m)\right),
% \end{equation*}
% % 
% where $m(v)=(2\pi)^{-\frac{d}{2}}e^{-\frac{1}{2}|v|^2}$.

% In section \ref{sec:collision_operator_MM} we explain in detail how the Landau equation for Maxwell molecules can be reduced to (\ref{eq:Landau_MM_linear}), then theorem \ref{thm:rel_L2_monotone} is proven in section \ref{sec:rel_L2} using (\ref{eq:Landau_MM_linear}), (\ref{eq:lambda_i}), and weighted Poincaré inequalities. 
% 

We briefly show how to derive the expression (\ref{eq:Landau_MM_linear}) assuming 
$$\int f_{\textnormal{in}}(1,v,|v|^2)\,dv =\int f(1,v,|v|^2)\,dv =(1,0,d).$$ This has already been done in \cite{desvillettes_villani2000landau_hard}, but we present a slightly modified proof for completeness, as these computations will be used in the proof of Theorem \ref{thm:rel_L2_monotone}.
\begin{proposition}
Let $f\ge 0$ such that $\int f(1,v,|v|^2)\,dv =(1,0,d)$.     The matrix 
  $$A[f](v,t):=\int_{\mathbb{R}^{d}}{K(v-w)}{\vert{v-w}\vert^{2}}f(w,t)\,dw$$
  in the Maxwell molecules case $ \alpha \equiv 1$ can be rewritten as 
  $$A[f](v,t)=\left\vert v \right\vert ^{2}K(v)+d\,\textnormal{Id} - \mathbb{P}(t)$$
   where $\mathbb{P}(t)$ is the matrix with entries $P_{ij}(t):=\int w_iw_jf(w,t)\,dw$. 
   
   Moreover, the Landau equation (\ref{L}) reduces to
  \begin{equation}\label{eq:Landau_max_mol}
    \partial_t f = \textnormal{div}\left( 
      \left[ \left\vert v \right\vert ^{2}K(v) + d\, \textnormal{Id}  - \mathbb{P}(t) \right]\nabla f
      + (d-1)vf
     \right).
  \end{equation}
\end{proposition}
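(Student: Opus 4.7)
My plan is to prove both claims by direct computation, leaning on the algebraic identity $|z|^{2}K(z)=|z|^{2}\textnormal{Id}-z\otimes z$ and on conservation of the first three moments of $f$ under the Landau flow with Maxwell molecules.

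To handle the matrix $A[f]$, I substitute $|v-w|^{2}K(v-w) = |v-w|^{2}\textnormal{Id}-(v-w)\otimes(v-w)$ and expand the polynomial dependence on $w$. The integrand against $f(w,t)\,dw$ then decomposes into terms proportional to $1$, $w$, $|w|^{2}$ and $w\otimes w$. Using $\int f\,dw=1$, $\int wf\,dw=0$, $\int|w|^{2}f\,dw=d$, and $\int w\otimes w\,f\,dw=\mathbb{P}(t)$, all $v\otimes w$ and $w\otimes v$ cross-terms vanish, and what remains collapses to
\begin{equation*}
(|v|^{2}+d)\textnormal{Id}-v\otimes v-\mathbb{P}(t) = |v|^{2}K(v)+d\,\textnormal{Id}-\mathbb{P}(t).
\end{equation*}

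For the second assertion, I split the collision integral as
\begin{equation*}
\partial_{t}f = \textnormal{div}_{v}\Big(A[f]\nabla_{v}f - f(v)\int|v-w|^{2}K(v-w)\nabla_{w}f(w)\,dw\Big),
\end{equation*}
so only the second piece requires work. Integrating by parts in $w$ and using the divergence identity
\begin{equation*}
\textnormal{div}_{w}\big(|v-w|^{2}K(v-w)\big) = (d-1)(v-w),
\end{equation*}
which follows from a one-line entrywise differentiation of $|z|^{2}\delta_{ij}-z_{i}z_{j}$ combined with $\partial_{w_{j}}(v_{k}-w_{k})=-\delta_{jk}$, I get
\begin{equation*}
\int|v-w|^{2}K(v-w)\nabla_{w}f(w)\,dw = -(d-1)\int(v-w)f(w)\,dw = -(d-1)v,
\end{equation*}
where the last equality again invokes mass $1$ and vanishing momentum. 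Substituting this back produces the drift $(d-1)vf$ inside the outer divergence, giving exactly (\ref{eq:Landau_max_mol}).

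The one non-routine point (essentially bookkeeping) is that the hypothesis $\int f_{\textnormal{in}}(1,v,|v|^{2})\,dv=(1,0,d)$ is stated only at the initial time, while the reduction of $A[f](v,t)$ uses the same identities at every $t\ge 0$. This is the standard conservation of mass, momentum and kinetic energy for the Landau equation with Maxwell molecules, obtained by testing (\ref{L}) against $1$, $v_{k}$, $|v|^{2}$ and integrating by parts in $v$; the property $K(v-w)(v-w)=0$ is what makes the momentum and energy balances close. Beyond this, the proof is pure algebra and a single integration by parts.
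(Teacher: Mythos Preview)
Your proof is correct and follows essentially the same route as the paper: both expand $|v-w|^{2}K(v-w)=|v-w|^{2}\textnormal{Id}-(v-w)\otimes(v-w)$ and reduce the integrals using the moment conditions $\int f(1,w,|w|^{2})\,dw=(1,0,d)$. For the drift term, the paper computes $\textnormal{div}_{v}A[f]=-(d-1)v$ directly from the simplified expression for $A[f]$, whereas you obtain the equivalent identity $\textnormal{div}_{w}\big(|v-w|^{2}K(v-w)\big)=(d-1)(v-w)$ and integrate by parts in $w$; since the matrix depends only on $v-w$, these are the same one-line computation up to a sign flip, and both yield the $(d-1)vf$ drift.
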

\begin{proof}
Direct computations show that
  \begin{equation*}
    A[f]=\textnormal{Id}\int\left\vert v-w \right\vert ^{2}f(w)\,dw-\int(v-w)\otimes(v-w)f(w)\,dw.
  \end{equation*}
  For the first term we have
  \begin{equation*}
    \int\left\vert v-w \right\vert ^{2}f(w)\,dw=\Bigg[\left\vert v \right\vert ^{2}\int f(w)\,dw-2v\cdot\int wf(w)\,dw+\int\left\vert w \right\vert ^{2}f(w)\,dw\Bigg]=\left(\left\vert v \right\vert ^{2}+d\right).
  \end{equation*}
  For the second:
  \begin{align*}
    \int(v-w)\otimes(v-w)f(w)\,dw &= v\otimes v\int f(w)\,dw-2v\otimes\int f(w)w\,dw + \int\left(w\otimes w\right)f(w)\,dw \\
     &= (v\otimes v)+ \int\left(w\otimes w\right)f(w)\,dw = (v\otimes v)+\mathbb{P}(t).
  \end{align*}
 % where we have introduced the notation $P_{ij}(t)=\int w_iw_jf(w)\,dw$. Therefore, 
 Hence we have
  \begin{equation*}
    A[f] = \left\vert v \right\vert ^{2}\Bigg(\textnormal{Id}-\frac{v\otimes v}{\left\vert v \right\vert ^{2}}\Bigg)+d\,\textnormal{Id} - \mathbb{P}(t) = \left\vert v \right\vert ^{2}K(v)+d\,\textnormal{Id} - \mathbb{P}(t).
  \end{equation*}
  This also provides a very simple expression for
  \begin{equation*}
    \textnormal{div}\left(A[f]\right)=\textnormal{div}\left(\left\vert v \right\vert ^{2}\textnormal{Id}-v\otimes v\right)
    =\left[2v-(d+1)v\right]=-(d-1)v.
  \end{equation*}
  Since (\ref{L}) can be expressed in terms of $A[f]$ as
  \begin{equation*}
    \partial_{t}f=\textnormal{div}\left(A[f]\nabla f-f\,\textnormal{div}A[f]\right),
  \end{equation*}
  substituting $ A[f] $ and $\textnormal{div}\left(A[f]\right)$ above we obtain (\ref{eq:Landau_max_mol}). 
\end{proof}
%
%
%%%%%%%%%%%%%%%%%%%%%%%%%%%%%%%%%%%%%%%%%%%%%%%%%%%%%%%%%%%%%%%%%%%%%%%%%%%
%\subsection{Radially symmetric case}
{\bf{Remark.}} If  the solution is rotationally symmetric $f(t,v)=f(t,\left\vert v \right\vert ^{2})$, equation (\ref{eq:Landau_max_mol}) reduces to the classical Fokker Planck equation. Let $F$ be an antiderivative of $f$, for each $t$ and $i\neq j$ we have
\begin{equation*}
  P_{ij}(t)=\int w_{i}w_{j}f(\left\vert w \right\vert ^{2})\,dw=\int w_{i}\partial_{w_{j}}\left(\frac{F(\left\vert w \right\vert ^{2})}{2}\right)\,dw=0.
\end{equation*}
For the case $i=j$, due to radial symmetry, the area under $w_{i}^{2}f(\left\vert w \right\vert ^{2})$ is the same for every $i$, so 
\begin{equation*}
  P_{ii}(t)=\int w_{i}^{2}f(\left\vert w \right\vert ^{2})\,dw=\frac{1}{d}\int\left\vert w \right\vert ^{2}f(\left\vert w \right\vert ^{2})\,dw=1.
\end{equation*}
This shows that $\mathbb{P}(t)=\textnormal{Id}$. Moreover, we have 
\begin{equation*}
  A[f]\nabla f(v)  =\left(\left\vert v \right\vert ^{2}K(v)+(d-1)\textnormal{Id}\right)2vf'(\left\vert v \right\vert ^{2})
  =(d-1)2vf'(\left\vert v \right\vert ^{2})=(d-1)\nabla f(v)
\end{equation*}
and the Landau equation for Maxwell molecules reduces to
\begin{equation*}
  \partial_{t}f=(d-1)\textnormal{div}\left(\nabla f(v)+vf(v)\right).
\end{equation*}
Next, we show how $A[f]$ evolves in time:

\begin{proposition}\label{thm:ODE_coeff}
 Let $f$ be a solution to (\ref{eq:Landau_max_mol}) with  $\int f(1,v,|v|^2)\,dv =(1,0,d)$. The matrix $\mathbb{P}$ with entries $P_{ij}(t)=\int w_i w_j f(w)\,dw$ satisfies the ode
  \begin{align} \label{ode_lambda_i}
  \mathbb{P}'(t) = -4d (\mathbb{P}(t)-\textnormal{Id}),
  \end{align}
 with explicit solution
 $$\mathbb{P}(t)= \textnormal{Id} + e^{-4dt}(\mathbb{P}_{in}-\textnormal{Id}).$$
\end{proposition}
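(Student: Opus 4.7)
The plan is to differentiate $P_{ij}(t)=\int v_iv_j f(v,t)\,dv$ in time, substitute $\partial_t f$ from the reformulated equation (\ref{eq:Landau_max_mol}), and integrate by parts twice to move all derivatives onto the smooth weight $v_iv_j$. This is a formal computation, justified under the standard moment and decay assumptions for Maxwell molecules. I would split the right-hand side into the drift piece $(d-1)\textnormal{div}(vf)$ and the diffusion piece $\textnormal{div}(A[f]\nabla f)$ and treat them separately.

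The drift contribution is immediate: one integration by parts together with $\nabla(v_iv_j)\cdot v=2v_iv_j$ gives $-2(d-1)P_{ij}(t)$. For the diffusion piece, two integrations by parts (legitimate because $A[f]$ is symmetric in its matrix indices) turn $\int v_iv_j\,\textnormal{div}(A[f]\nabla f)\,dv$ into $\int \textnormal{div}(A[f]\nabla(v_iv_j))\,f\,dv$. A short index calculation, using the identity $\textnormal{div}(A[f])=-(d-1)v$ proved in the previous proposition together with $\nabla(v_iv_j)=v_j e_i+v_i e_j$, yields
\begin{equation*}
\textnormal{div}\bigl(A[f]\nabla(v_iv_j)\bigr)=-2(d-1)v_iv_j+2A_{ij}(v,t).
\end{equation*}

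It then remains to evaluate $2\int A_{ij}(v,t)\,f\,dv$ using the explicit expression $A_{ij}=|v|^2\delta_{ij}-v_iv_j+d\,\delta_{ij}-P_{ij}(t)$ combined with the conservation laws $\int f\,dv=1$ and $\int |v|^2 f\,dv=d$, which are preserved by the Landau equation under the assumed normalization of the initial data. This gives $\int A_{ij}f\,dv=2d\,\delta_{ij}-2P_{ij}(t)$. Summing the drift and diffusion contributions produces
\begin{equation*}
P'_{ij}(t)=-4d\bigl(P_{ij}(t)-\delta_{ij}\bigr),
\end{equation*}
which is exactly (\ref{ode_lambda_i}). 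The closed-form solution $\mathbb{P}(t)=\textnormal{Id}+e^{-4dt}(\mathbb{P}_{in}-\textnormal{Id})$ then follows by solving this linear matrix ODE.

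The calculation is essentially bookkeeping, and the main obstacle is keeping track of the several sources of $v_iv_j$ contributions (from $\nabla(v_iv_j)$, from $|v|^2 K(v)$ inside $A[f]$, and from the drift), and verifying that they cancel appropriately to produce the claimed rate $-4d$. The two structural inputs that make the argument close are the symmetry of $A[f]$ (needed to transpose the diffusion operator onto the test weight) and conservation of mass and energy (which pins down the constant terms in $\int A_{ij}f\,dv$).
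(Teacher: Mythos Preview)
Your proposal is correct and follows essentially the same route as the paper: differentiate $P_{ij}$, integrate by parts twice against the weight $v_iv_j$, use $\textnormal{div}(A[f])=-(d-1)v$ to obtain $\textnormal{div}(A[f]\nabla(v_iv_j))=-2(d-1)v_iv_j+2A_{ij}$, and then evaluate $\int A_{ij}f\,dv$ via the conservation laws. The only cosmetic difference is that the paper keeps the drift and diffusion pieces bundled as $\textnormal{div}(A[f]\nabla f-\textnormal{div}(A[f])f)$ before integrating by parts, and computes $\int A_{ij}f\,dv$ by rewriting $A[f]$ as $|v|^2K(v)+\int|w|^2K(w)f(w)\,dw$ rather than using the explicit componentwise formula you invoke; your direct evaluation is arguably cleaner.
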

\begin{proof}
  Using the equation for $f$, and $\textnormal{div}\left(A[f]\right)=-(d-1)v$ we deduce
  \begin{align*}
    \frac{d}{dt}\int w_{i}w_{j}f(w)\,dw & =\int w_{i}w_{j}\textnormal{div}_{v}\bigg(A[f]\nabla f\,dw-\textnormal{div}_{v}\left(A[f]\right)f\bigg)\,dw\\
     & =-\int\left\langle \nabla(w_{i}w_{j}),A[f]\nabla f \right\rangle\,dw +\int\left\langle \nabla(w_{i}w_{j}),\textnormal{div}\left(A[f]\right) \right\rangle f\,dw\\
    %  & =-\int\left\langle A[f]\nabla(w_{i}w_{j}),\nabla f \right\rangle\,dw -(d-1)\int\left\langle \nabla(w_{i}w_{j}),w \right\rangle f\,dw\\
     & =\int\textnormal{div}\left(A[f]\nabla(w_{i}w_{j})\right)f\,dw-(d-1)\int\left\langle \nabla(w_{i}w_{j}),w \right\rangle f\,dw.
  \end{align*}
  The first term simplifies as
  \begin{align*}
    \textnormal{div}\left(A[f]\nabla(w_{i}w_{j})\right) & =\textnormal{div}\left(A[f]\right)\cdot\nabla(w_{i}w_{j})+\textnormal{tr}\left(A[f] D^{2}(w_{i}w_{j})\right)\\
      & =-(d-1)w\cdot\nabla(w_{i}w_{j})+\textnormal{tr}\left(A[f] D^{2}(w_{i}w_{j})\right).
  \end{align*}
  Note that $D^{2}(w_{i}w_{j})=\Delta_{ij}+\Delta_{ji}$, where $\Delta_{ij}$ is the matrix with all zeroes except a one at position $(i,j)$. In general, if $A$ is a matrix then $A\Delta_{ij}$ has all of its columns equal to $0$ except the $j$-th, where you find the $i$-th column of $A$. Therefore, if the matrix $A$ is symmetric, we have $\textnormal{tr}\left(AD^{2}(w_{i}w_{j})\right)=2A_{ij}$, which shows:
  \begin{equation*}
    \textnormal{div}\left(A[f]\nabla(w_{i}w_{j})\right)=
    -(d-1)w\cdot\left(\vec{e}_{i}w_{j}+\vec{e}_{j}w_{i}\right)+2A_{ij}[f]
    =-2(d-1)w_{i}w_j+2A_{ij}[f].
  \end{equation*}
  Similarly, we have
  \begin{equation*}
    \int\left\langle \nabla(w_{i}w_{j}),w \right\rangle f\,dw=2\int w_{i}w_{j}f(w)\,dw,
  \end{equation*}
  which shows 
  \begin{equation}\label{eq:evol_A_Landau_homo_maxwel}
    \frac{d}{dt}\int w_{i}w_{j}f(w)\,dw=2\int A_{ij}[f]f(w)\,dw-4(d-1)\int w_{i}w_{j}f(w)\,dw.
  \end{equation}
  Now we use the following expression for $A[f]$
  \begin{align*}
    A[f] &	=\left\vert v\right\vert ^{2}K(v)+d\,\textnormal{Id}- \mathbb{P}(t) \\
    % & =\left\vert v\right\vert ^{2}K(v)+\textnormal{Id}\int\vert w\vert^{2}f(w)\,dw-\int(w\otimes w)f(w)\,dw\\
    & =\left\vert v\right\vert ^{2}K(v)
    +\int\vert w\vert^{2}\left(\textnormal{Id}-\frac{w\otimes w}{\vert w\vert^{2}}\right)f(w)\,dw,
  \end{align*}
  which we plug into (\ref{eq:evol_A_Landau_homo_maxwel}), yielding
  \begin{align*}
    2\int A_{ij}[f](w)f(w)\,dw & =2\int\Bigg(\left\vert w \right\vert ^{2}K(w)+\int\left\vert \xi \right\vert ^{2}K(\xi)f(\xi)\,d\xi\Bigg)f(w)\,dw\\
      % & =2\int\left\vert w \right\vert ^{2}K(w)f(w)\,dw+2\int\left\vert w \right\vert ^{2}K(w)f(w)\,dw\\
      & =4d\,\textnormal{Id}-4\mathbb{P}(t),
  \end{align*}
  which finishes the proof.
\end{proof}
{\bf{Remark.}} The previous formula allows one to simplify the expression of $\mathbb{P}(t)$. Let $f_{in}$ be a general initial data with second moment matrix $ \mathbb{P}_{in}$. Let $R$ be the  rotation matrix such that $R\, \mathbb{P}_{in} R^{t}=\textnormal{diag}\left(\lambda_{1}^{\textnormal{in}},\ldots,\lambda_{d}^{\textnormal{in}}\right)$. Then,  the rotated initial data $\tilde{f}_{in}(v):=f_{in}\left(R^{-1}v\right)$ has diagonal second moment matrix: 
%, the corresponding matrix $ \tilde{\mathbb{P}}_{in}$ is diagonal: 
\begin{equation*}
  \mathbb{P}_{in}=\int(v\otimes v)f_{in}\left(R^{-1}v\right)\,dv=\int(Rv)\otimes(Rv)f_{in}(v)\,dv=R\,\mathbb{P}(0)R^{t}
  =\textnormal{diag}\left(\lambda_{1}^{\textnormal{in}},\ldots,\lambda_{d}^{\textnormal{in}}\right).
\end{equation*}
Without loss of generality, in Section \ref{sec:rel_L2} we will consider initial data such that $ \int w_i w_j f_{in} \;dw =0$ if $ i \neq j$. Then $\mathbb{P}_{in}$ is a diagonal matrix and, in view of (\ref{ode_lambda_i}), $\mathbb{P}(t)$ is also diagonal with elements $\lambda_{i}(t)=1+e^{-4dt}\left(\lambda_{i}^{\textnormal{in}}-1\right)$. Because of conservation of the second moment we have that  for all times $t\ge 0 $:
$$
\textnormal{tr}( \mathbb{P}(t)) = \sum_{i =1}^d \lambda_i(t) = d.
$$

%%%%%%%%%%%%%%%%%%%%%%%%%%%%%%%%%%%%%%%%%%%%%%%%%%%%%%%%%%%%%%%%%%%%%%%%%%%
%%%%%%%%%%%%%%%%%%%%%%%%%%%%%%%%%%%%%%%%%%%%%%%%%%%%%%%%%%%%%%%%%%%%%%%%%%%
\section{Exponential decay of relative $L^2$ norm}\label{sec:rel_L2}
We start the proof Theorem \ref{thm:rel_L2_monotone} with the following computation: 
\begin{lemma}\label{lem:ineq_lambda_minus1}
  Let $f$ be a solution of (\ref{eq:Landau_max_mol}) with initial data such that $\mathbb{P}_{in}$ is diagonal. We have
  \begin{align*}
    \frac{1}{2}\frac{d}{dt}\int\left(\frac{f}{ m }-1\right)^{2} m \,dv \leq 
   & -(d-1)\int \left\vert\nabla\left(\frac{f}{ m } \right)\right\vert^{2} m \,dv \\
   & +\sum_{i}(\lambda_{i}(t) -1)\int\left(\left[\partial_{v_{i}}\left(\frac{f}{ m } \right)\right]^{2}-\frac{1}{2}\frac{f^{2}}{ m ^{2}}v_{i}^{2}\right) m\,dv,
  \end{align*}
  where $\lambda_{i}(t)=\int w_i^2 f(w)\,dw=1+e^{-4dt}\left(\lambda_{i}^{\textnormal{in}}-1\right)$.
\end{lemma}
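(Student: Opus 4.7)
The idea is to set $u:=f/m$, rewrite equation (\ref{eq:Landau_max_mol}) in divergence form with the Gaussian weight factored out, test against $u-1$, and then handle a cross term by a second integration by parts that exploits $\textnormal{tr}(\mathbb{P}(t))=d$.

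First I would use $\nabla m=-vm$ to write $\nabla f=m\nabla u-vmu$, so
\begin{equation*}
A[f]\nabla f+(d-1)vf=m\bigl[A[f]\nabla u-A[f]vu+(d-1)vu\bigr].
\end{equation*}
Since $K(v)v=0$, a direct computation gives $A[f]v=dv-\mathbb{P}(t)v$, and hence $-A[f]v+(d-1)v=(\mathbb{P}(t)-\textnormal{Id})v$. Therefore equation (\ref{eq:Landau_max_mol}) takes the compact form
\begin{equation*}
\partial_t f=\textnormal{div}\Bigl(m\bigl[A[f]\nabla u+(\mathbb{P}(t)-\textnormal{Id})v\,u\bigr]\Bigr).
\end{equation*}
Using conservation of mass ($\int\partial_t f\,dv=0$),
\begin{equation*}
\frac12\frac{d}{dt}\int(u-1)^2 m\,dv=\int u\,\partial_t f\,dv=-\int m\,\nabla u^T A[f]\nabla u\,dv-\int m\,u\,(\mathbb{P}(t)-\textnormal{Id})v\cdot\nabla u\,dv,
\end{equation*}
where the last equality is integration by parts.

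Next I would treat the cross term by writing $u\nabla u=\tfrac12\nabla(u^2)$ and integrating by parts once more. Using $\nabla m=-vm$ together with $\textnormal{tr}(\mathbb{P}(t)-\textnormal{Id})=0$ (guaranteed by $\textnormal{tr}(\mathbb{P}(t))=d$, see the remark after Proposition \ref{thm:ODE_coeff}) yields
\begin{equation*}
\textnormal{div}\bigl(m(\mathbb{P}(t)-\textnormal{Id})v\bigr)=-m\,v^T(\mathbb{P}(t)-\textnormal{Id})v=-m\sum_i(\lambda_i(t)-1)v_i^2,
\end{equation*}
where diagonality of $\mathbb{P}(t)$ was used in the last step. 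The cross term thus becomes $-\tfrac12\sum_i(\lambda_i(t)-1)\int u^2 v_i^2\, m\,dv$.

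Finally, I would split the diffusion matrix as
\begin{equation*}
A[f]=(d-1)\textnormal{Id}+|v|^2 K(v)+(\textnormal{Id}-\mathbb{P}(t)).
\end{equation*}
Because $K(v)$ is the orthogonal projection onto $v^{\perp}$, it is positive semidefinite, so $\int m|v|^2\nabla u^T K(v)\nabla u\,dv\ge 0$; dropping this nonnegative contribution (and again using diagonality of $\mathbb{P}(t)$) gives
\begin{equation*}
-\int m\,\nabla u^T A[f]\nabla u\,dv\le -(d-1)\int|\nabla u|^2 m\,dv+\sum_i(\lambda_i(t)-1)\int(\partial_i u)^2 m\,dv.
\end{equation*}
Adding the cross-term contribution produces exactly the stated inequality. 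The only real bookkeeping issue is to keep signs straight through the two integrations by parts: since the quantities $\lambda_i(t)-1$ have no definite sign, one must verify that discarding the positive semidefinite $|v|^2K(v)$ piece yields a one-sided estimate in the correct direction — which it does thanks to the minus sign in front of $\int m\nabla u^T A[f]\nabla u\,dv$.
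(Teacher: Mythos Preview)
Your proof is correct and follows essentially the same approach as the paper: test the equation against $f/m$, discard the nonnegative $|v|^2K(v)$ contribution to obtain the inequality, and exploit $\textnormal{tr}(\mathbb{P}(t))=d$ to simplify the remaining terms. Your organization is slightly more streamlined---by factoring out $m$ at the outset and writing the flux as $m\bigl[A[f]\nabla u+(\mathbb{P}(t)-\textnormal{Id})vu\bigr]$, you avoid the paper's separate $I_1$, $I_2$ computations---but the underlying argument is the same.
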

\begin{proof}
 We rewrite (\ref{eq:Landau_max_mol}) as 
 \begin{align*}
  \partial_{t}f  =\textnormal{div}\left(\vert v\vert^{2}K(v)\nabla f\right)+d\,\textnormal{div}\left( m \nabla \left(\frac{f}{ m } \right)\right)-\textnormal{div}\left(fv+\mathbb{P}\nabla f\right).
\end{align*}
Because of conservation of mass,  $\int f^2/m\,dv$ and $\int \left(f/m-1\right)^2m\,dv$ only differ by a constant, so
  \begin{align*}
    \frac{1}{2}\frac{d}{dt}\int\left(\frac{f}{ m }-1\right)^{2} m\,dv & 
    = \int \frac{f}{m}\partial_tf\,dv \\
    & = -\int\left\langle K(v)\left\vert v \right\vert ^{2}\nabla f(v),\nabla\left(f/ m \right)\,dv \right\rangle \\ 
    & -d\int\left\vert \nabla\left(f/ m \right) \right\vert ^{2} m\,dv 
    +\int\left\langle fv+\mathbb{P}\nabla f,\nabla(f/ m ) \right\rangle\,dv .
  \end{align*}
  Since $K(v) \nabla m =0$, the first term in the last expression is negative:
  \begin{equation*}
    \left\langle K(v)\left\vert v \right\vert ^{2}\nabla f(v),\nabla\left(f/ m \right) \right\rangle =\left\langle \left\vert v \right\vert ^{2}K(v)\nabla(f/ m ),\nabla\left(f/ m \right) \right\rangle  m \geq0.
  \end{equation*}
 % {\color{blue}{maria: Just a comment. The one above is the only good term that we discard. We don't know how to make use of it.}}
  
  Therefore, we have
  \begin{equation*}
    \frac{1}{2}\frac{d}{dt}\int\left(\frac{f}{ m }-1\right)^{2} m\,dv \leq
    -d\int\left\vert \nabla\left(f/ m \right) \right\vert ^{2} m\,dv + I_1 + I_2,
  \end{equation*}
  where $I_1 := \int fv\cdot\nabla(f/ m )\,dv$ and $I_2 :=\int\left\langle \mathbb{P}\nabla f,\nabla(f/ m ) \right\rangle\,dv$. Expanding $\nabla (f/m)$ in $I_1$ yields
  \begin{align*}
    I_1 =\int fv\cdot\frac{\nabla f+fv}{ m }\,dv & =\int\frac{\left\vert v \right\vert ^{2}}{ m }f^{2}\,dv+\int\frac{v}{ m }\cdot\overbrace{f\nabla f}^{\frac{1}{2}\nabla(f^{2})}\,dv\\
    %  & =\int\frac{\left\vert v \right\vert ^{2}}{ m }f^{2}\,dv-\frac{1}{2}\int f^{2}\,\textnormal{div}\left(\frac{v}{ m }\right)\,dv\\
     & =\int\frac{\left\vert v \right\vert ^{2}}{ m }f^{2}\,dv-\frac{1}{2}\int f^{2}\left(\frac{d+\left\vert v \right\vert ^{2}}{ m }\right)\,dv \\
     & =\frac{1}{2}\int\frac{f^{2}}{ m }\left(\left\vert v \right\vert ^{2}-d\right)\,dv.
  \end{align*}
  The identity $\nabla f= m \nabla(f/ m )-fv$ yields 
  \begin{align*}
    I_2 &	=\int\left\langle  \mathbb{P} \left( m \nabla(f/ m )-fv\right),\nabla(f/ m ) \right\rangle \,dv\\
    & = \int\left\langle  \mathbb{P} \nabla(f/ m ),\nabla(f/ m ) \right\rangle  m \,dv -\int\left\langle  \mathbb{P} v,f\nabla(f/ m ) \right\rangle\,dv .
  \end{align*}
 % We now work on the last term using the same procedure as for $\int fv\cdot\nabla(f/ m )$:
  We rewrite the last term as 
  \begin{align*}
    -\int\left\langle  \mathbb{P} v,f\nabla(f/ m ) \right\rangle\,dv  & =-\frac{1}{2}\int \mathbb{P} v\cdot\nabla\left(f^{2}/ m ^{2}\right)\, m\,dv \\
    %  & =\frac{1}{2}\int\frac{f^{2}}{ m ^{2}}\,\textnormal{div}\left( m  \mathbb{P} v\right) \,dv\\
     & =\frac{1}{2}\int\frac{f^{2}}{ m ^{2}}\left( m \,\textnormal{div}\left( \mathbb{P} v\right)- \mathbb{P} v\cdot v m \right)\,dv\\
     & =\frac{d}{2}\int\frac{f^{2}}{ m }\,dv-\frac{1}{2}\int\frac{f^{2}}{ m }\left\langle  \mathbb{P} v,v \right\rangle\,dv ,
  \end{align*}
  since $\textnormal{tr}( \mathbb{P}(t)) = d$.  This yields
  \begin{align*}
    I_1 + I_2 & 
    = \frac{1}{2}\int\frac{f^{2}}{ m }\left(\left\vert v \right\vert ^{2}-d\right) \,dv +
    \int\left\langle  \mathbb{P} \nabla(f/ m ),\nabla(f/ m ) \right\rangle  m\,dv \\
    &+ \frac{d}{2}\int\frac{f^{2}}{ m } \,dv -\frac{1}{2}\int\frac{f^{2}}{ m }\left\langle  \mathbb{P} v,v \right\rangle\,dv \\
    & = \frac{1}{2}\int\frac{f^{2}}{ m }\left\vert v \right\vert^{2}\,dv - \frac{1}{2}\sum_i \lambda_i \int \frac{f^2}{m}v_i^2\,dv +
    \sum_i \lambda_i \int \left[\partial_{v_{i}}(f/ m )\right]^{2}m\,dv\\
    & = -\frac{1}{2}\sum_i (\lambda_i - 1)\int \frac{f^2}{m}v_i^2\,dv + 
    \sum_i (\lambda_i - 1) \int \left[\partial_{v_{i}}(f/ m )\right]^{2}m\,dv + \int\vert\nabla\left(f/ m \right)\vert^{2} m\,dv,
  \end{align*}
  which proves the lemma.
\end{proof}
In the next result we show a gaussian weighted Poincare inequality, which is a (slightly) sharper version of the one found in \cite{Dolb_Volzone}.
\begin{lemma}\label{lem:improved_gauss_poincare}
  Let $m(x)=(2\pi)^{-\frac{d}{2}}e^{-\frac{|x|^2}{2}}$ be the Gaussian weight on $\mathbb{R}^d$. Then for any smooth function $\varphi:\mathbb{R}^d\to\mathbb{R}$ we have
  \begin{align*}
  \int_{\R^d} \vert x_i\vert^2\varphi^2(x) m(x)\,dx  \le {2} \int_{\R^d} \vert \partial_{x_i}(x) \varphi\vert^2m(x)\,dx + 2  \int_{\R^d} \varphi^2(x) m(x)\,dx,
  \end{align*}
  % \begin{align*}
  %  \int_{\R^d} \vert x\vert^2\varphi^2(x) m(x)\,dx  \le {2} \int_{\R^d} \vert \nabla \varphi\vert^2m(x)\,dx + 2d  \int_{\R^d} \varphi^2(x) m(x)\,dx.
  % \end{align*}
  Moreover,  if $\varphi$ has zero average $\int_{\R^d} \varphi(x)m(x)\, dx = 0$ then we have
  \begin{equation*}
    \int_{\R^d} \varphi^2(x) m(x)\,dx   \leq \int \vert \nabla \varphi(x)\vert^2m(x)\,dx.
  \end{equation*}
    % \begin{align*}
    %  \int_{\R^d} \varphi^2(x) m(x)\,dx  & \leq \int \vert \nabla \varphi\vert^2m(x)\,dx,\\
    %    \int_{\R^d} \vert x\vert^2\varphi^2(x) m(x)\,dx  & \leq 2(d+{1})\int_{\R^d} \vert \nabla \varphi\vert^2m(x)\,dx.
    % \end{align*}
\end{lemma}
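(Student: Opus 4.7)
The two inequalities have quite different flavors, so I would treat them separately. For the first (a weighted Poincaré estimate with a polynomial weight), the natural tool is the identity $x_i\, m = -\partial_{x_i} m$, which lets one trade the $x_i^2$ weight for derivatives. Integration by parts gives
\begin{equation*}
\int_{\mathbb{R}^d} x_i^2 \varphi^2\, m\,dx
= -\int_{\mathbb{R}^d} x_i \varphi^2\, \partial_{x_i} m\,dx
= \int_{\mathbb{R}^d}\varphi^2 m\,dx + 2\int_{\mathbb{R}^d} x_i \varphi\, \partial_{x_i}\varphi\, m\,dx.
\end{equation*}
The cross term is then controlled by a weighted Young inequality of the form $2|x_i\varphi \,\partial_{x_i}\varphi| \le \epsilon\, x_i^2 \varphi^2 + \epsilon^{-1}(\partial_{x_i}\varphi)^2$, with the parameter $\epsilon$ chosen small enough that the resulting $\epsilon\!\int x_i^2\varphi^2 m$ can be absorbed on the left. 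Rearranging produces a bound of the desired form $\int x_i^2 \varphi^2 m \le C_1 \int (\partial_{x_i}\varphi)^2 m + C_2\int\varphi^2 m$, with constants coming from the optimization over $\epsilon$.

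For the second inequality (the sharp Gaussian Poincaré bound with constant $1$ under the zero-mean condition), the cleanest route is via the spectral theory of the Ornstein--Uhlenbeck operator $L = -\Delta + x\cdot\nabla$. One verifies by integration by parts against $m$ that $\int \varphi\, L\varphi\, m\,dx = \int|\nabla\varphi|^2 m\,dx$. The Hermite polynomials form a complete orthonormal eigenbasis of $L$ with eigenvalues $0, 1, 2, \ldots$, the kernel being one-dimensional and spanned by the constants. The hypothesis $\int \varphi\, m\,dx = 0$ places $\varphi$ in the orthogonal complement of the kernel, so the spectral gap $\lambda_1 = 1$ immediately yields $\int\varphi^2 m \le \int|\nabla\varphi|^2 m$. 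Alternatively, a $\Gamma$-calculus or Bakry--\'Emery argument, using the log-concavity of $m$, recovers the same bound without invoking spectral theory.

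I expect the main obstacle to be pinning down the sharp constants in the first inequality: the bare Young-inequality absorption typically overshoots one of the coefficients, so a more refined splitting may be required. One natural idea is to decompose $\varphi = \bar{\varphi} + \tilde{\varphi}$ into its Gaussian mean $\bar{\varphi}= \int \varphi\, m\, dx$ and a zero-mean remainder, apply the second inequality to $\tilde{\varphi}$ (which costs nothing on the derivative side), handle the constant piece by the elementary identity $\int x_i^2 m\,dx = 1$, and treat the cross term $2\bar{\varphi}\int x_i^2 \tilde{\varphi}\, m$ via Cauchy--Schwarz. This sort of orthogonal decomposition should recover the claimed constants or make clear which constants are in fact optimal.
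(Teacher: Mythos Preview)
Your integration-by-parts route for the first inequality is genuinely different from the paper's: the paper expands $\varphi$ in Hermite polynomials, uses the recurrence $x\,\textnormal{He}_n=\textnormal{He}_{n+1}+\textnormal{He}_n'$, and then applies $(a+b)^2\le 2a^2+2b^2$ to the two resulting sums. Your approach is more elementary, and the constants it produces are in fact the right ones: taking $\epsilon=\tfrac12$ in your absorption step gives
\[
\int x_i^2\varphi^2\, m\,dx \;\le\; 4\int|\partial_{x_i}\varphi|^2\, m\,dx + 2\int\varphi^2\, m\,dx,
\]
and no refinement will reach the pair $(2,2)$, because the inequality as stated in the lemma is false. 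For $d=1$ and $\varphi(x)=x^2$ the left side equals $\int x^6 m=15$ while the right side is $2\cdot 4+2\cdot 3=14$. The paper's Hermite argument slips at the final identification: the shift $\varphi\mapsto\sum_\alpha a_\alpha\,\textnormal{He}_{\alpha_1+1}\textnormal{He}_{\alpha_2}\cdots\textnormal{He}_{\alpha_d}$ is precisely the creation operator $A^*=x_1-\partial_{x_1}$, and one has $\|A^*\varphi\|_{L^2(m)}^2=\|\partial_{x_1}\varphi\|_{L^2(m)}^2+\|\varphi\|_{L^2(m)}^2$ (from the commutator $[\partial_{x_1},A^*]=\textnormal{Id}$), not $\|\varphi\|_{L^2(m)}^2$ as the paper asserts. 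With that correction the Hermite method also yields $(4,2)$, matching your computation. So the ``main obstacle'' you anticipate does not exist: your first calculation is already the correct bound, and the mean/zero-mean decomposition you propose cannot recover constants that are not there.

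For the second inequality your spectral argument via the Ornstein--Uhlenbeck operator is essentially the paper's own argument in different language (the paper invokes the same Hermite basis through $\textnormal{He}_n'=n\,\textnormal{He}_{n-1}$); both are standard proofs of the sharp Gaussian Poincar\'e inequality.
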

\begin{proof}
  We consider the basis of Hermite polynomials $\{H^\alpha\}_{\alpha\in \N^d}$ defined by
  \begin{align*}
    H_\alpha(x) = \frac{(-1)^{|\alpha|}}{\sqrt{\alpha!}} e^{\frac{|x|^2}{2}} \partial_\alpha e^{-\frac{|x|^2}{2}}(x)
    = \textnormal{He}_{\alpha_1}(x_1) \cdots \textnormal{He}_{\alpha_d}(x_d),
  \end{align*}
  where $\alpha = (\alpha_1, \ldots, \alpha_d)$ is a multi-index and $\alpha! = \prod_{i=1}^d \alpha_i!$.
  The Hermite polynomials form a complete orthonormal basis with respect to the Gaussian measure $m(x)$. Any smooth function $\varphi$ can be expressed as
  \begin{align*}
    \varphi(x) = \sum_{\alpha} a_\alpha H_\alpha(x),
  \end{align*}
  where $a_\alpha = \int_{\R^d} \varphi(x) H_\alpha(x) m(x) \, dx$ are the coefficients of the expansion. We start with the first inequality. We have
  \begin{align*}
    \int_{\R^d} \vert x_1 \vert^2\varphi^2(x) m(x)\,dx &= \int_{\R^d}\left|\sum_{\alpha} a_\alpha x_1 H_\alpha \right|^2m(x)\,dx\\
    &=\int_{\R^d}\left|\sum_{\alpha} a_\alpha x_1\textnormal{He}_{\alpha_1}(x_1)\cdots \textnormal{He}_{\alpha_d}(x_d)\right|^2m(x)\,dx\\
    &=\int_{\R^d}\left|\sum_{\alpha} a_\alpha (\textnormal{He}_{\alpha_1+1}(x_1)+\textnormal{He}_{\alpha_1}'(x_1) )\cdots \textnormal{He}_{\alpha_d}(x_d)\right|^2m(x)\,dx,\\
  \end{align*}
  where we have used the identity $x\textnormal{He}_n=\textnormal{He}_{n+1}+\textnormal{He}_n'$. We thus have
  \begin{align*}
    \int_{\R^d} \vert x_1 \vert^2\varphi^2(x) m(x)\,dx&\le 2\int_{\R^d}\left|\sum_{\alpha} a_\alpha \textnormal{He}_{\alpha_1+1}(x_1)\cdots \textnormal{He}_{\alpha_d}(x_d)\right|^2m(x)\,dx\\
    &\qquad + 2\int_{\R^d}\left|\sum_{\alpha} a_\alpha \textnormal{He}_{\alpha_1}'(x_1)\cdots \textnormal{He}_{\alpha_d}(x_d)\right|^2m(x)\,dx\\
    &= 2\int_{\R^d} |\varphi(x)|^2 m(x)\,dx + 2\int_{\R^d} |\partial_{x_1}\varphi(x)|^2 m(x)\,dx.
  \end{align*}
  The Poincare inequality follows in a similar manner, using the identity $\textnormal{He}_n'=n\textnormal{He}_{n-1}$.
\end{proof}

We are now ready to prove Theorem \ref{thm:rel_L2_monotone}: 

\begin{proof}[Proof of Theorem \ref{thm:rel_L2_monotone}]
  We start with Proposition~\ref{lem:ineq_lambda_minus1}, using that $ \lambda_i-1 =e^{-4dt}(\lambda_i^\textnormal{in}-1)$:
  \begin{align*}
    \frac{1}{2}E'(t) & \leq
    -(d-1)\int\vert\nabla\left(f/ m \right)\vert^{2} m\,dv + 
    \sum_{i}(\lambda_{i}-1)\int\left(\left[\partial_{v_{i}}(f/ m )\right]^{2}-\frac{1}{2}\frac{f^{2}}{ m ^{2}}v_{i}^{2}\right) m\,dv\\
    & =  -(d-1)\int\vert\nabla\left(f/ m \right)\vert^{2} m\,dv + 
    e^{-4dt}\sum_{i}(\lambda_{i}^{in}-1)\int\left(\left[\partial_{v_{i}}(f/ m )\right]^{2}-\frac{1}{2}\frac{f^{2}}{ m ^{2}}v_{i}^{2}\right) m\,dv \\
     &=  \sum_{i=1}^d \left[ - \big(d-1-e^{-4dt}(\lambda_{i}^{in}-1)\big) \int \left[\partial_{v_{i}}(f/ m )\right]^{2}m\;dv  - \frac{1}{2}e^{-4dt}(\lambda_{i}^{in}-1) \int \frac{f^2}{m}v_i^2 \,dv\right].
   % & \leq - \underbrace{\left(d-1 - e^{-4dt}\sup_i (\lambda_i^\textnormal{in}-1)\right)}_{\ge 0}\int\vert\nabla\left(f/ m \right)\vert^{2} m -\frac{1}{2}\sum_i (\lambda_i-1) \int \frac{f^2}{m}v_i^2.
  \end{align*}
 Because the $\lambda_i^{in}$ are nonnegative, and their sum equals to $d$, at most $d-1$ of them are greater than one and at most $d-1$ of them are less than one. Also, the maximum value of $(\lambda_i^{in}-1)$ is $d-1$ and the minimum is $-1$. Now, we note that
  \begin{align*}
    -\frac{1}{2}\sum_{i}(\lambda_{i}-1)\int\left(\frac{f}{m}-1\right)^{2}v_{i}^{2}m\,dv & =
    -\frac{1}{2}\sum_{i}(\lambda_{i}-1)\int\left(\frac{f^{2}}{m}v_{i}^{2}-2fv_{i}^{2}+mv_{i}^{2}\right)\,dv 	\\
    & =-\frac{1}{2}\sum_{i}(\lambda_{i}-1)\int\frac{f^{2}}{m}v_{i}^{2}\,dv+\sum_{i}(\lambda_{i}-1)\lambda_{i}, 
  \end{align*}
  since $\sum_i(\lambda_i-1)=0$. Moreover $\sum_{i}(\lambda_{i}-1)\lambda_{i} = \sum_{i}(\lambda_{i}-1)^2\geq 0$, which implies 
  $$
  -\frac{1}{2}\sum_{i}(\lambda_{i}-1)\int\frac{f^{2}}{m}v_{i}^{2}\,dv \le -\frac{1}{2}\sum_{i}(\lambda_{i}-1)\int\left(\frac{f}{m}-1\right)^{2}v_{i}^{2}m \,dv.
  $$
  Going back to the original estimate, we have that 
  \begin{align}
    \frac{1}{2}E'(t) & 
    \leq \sum_{i=1}^d\Bigg[ - \big(d-1-e^{-4dt}(\lambda_{i}^{in}-1)\big) \int \left[\partial_{v_{i}}(f/ m )\right]^{2}m\;dv\nonumber	\\
    & \qquad- \frac{1}{2}e^{-4dt}(\lambda_{i}^{in}-1) \int \left( \frac{f}{m}-1\right)^2v_i^2m \,dv\Bigg].\label{eq:ineq_lambda_minus1}
  \end{align}
  % \begin{align*}
  %   & \frac{1}{2}\frac{d}{dt}\int\left(\frac{f}{ m }-1\right)^{2} m   \\
  %     \leq  
  %     \sum_{i=1}^d& \left[ - (d-1-e^{-4dt}(\lambda_{i}^{in}-1)) \int \left[\partial_{v_{i}}(f/ m )\right]^{2}m\;dv  - \frac{1}{2}e^{-4dt}(\lambda_{i}^{in}-1) \int \left( \frac{f^2}{m^2}-1\right)^2v_i^2m \right].
  % \end{align*}
    For the $\lambda^{in}_i $ greater or equal $1$, the quantity in the square bracket is nonpositive, and we just trivially bound 
\begin{align*}
    &- \big(d-1-e^{-4dt}(\lambda_{i}^{in}-1)\big) \int \left[\partial_{v_{i}}(f/ m )\right]^{2}m\;dv  - \frac{1}{2}e^{-4dt}(\lambda_{i}^{in}-1) \int \left(\frac{f}{m}-1\right)^{2}m v_i^2 \,dv\\
    &\qquad\le - \big(d-1-e^{-4dt}(\lambda_i^{\textnormal{in}}-1)\big) \int \left[\partial_{v_{i}}(f/ m )\right]^{2}m\;dv\le 0.
\end{align*}
  For the terms with  $\lambda^{in}_i<1$ in \eqref{eq:ineq_lambda_minus1} we use lemma \ref{lem:improved_gauss_poincare} (for $ \varphi = f/m-1$) to obtain
  \begin{align*}
    - \big(d-1-e^{-4dt}(\lambda_{i}^{in}-1)\big) \int \left[\partial_{v_{i}}(f/ m )\right]^{2}m\;dv  + \frac{1}{2}e^{-4dt}(1-\lambda_{i}^{in})\int \left( \frac{f}{m}-1\right)^2 mv_i^2\,dv\\
    \qquad\le - (d-1) \int \left[\partial_{v_{i}}(f/ m )\right]^{2}m\;dv+e^{-4dt}(1-\lambda_{i}^{in})\int \left( \frac{f}{m}-1\right)^2 m\,dv.
  \end{align*}

  Assume now the worst case scenario in which only one $\lambda_i^{\textnormal{in}}$ is greater or equal than $1$ and the rest are all less or equal than $1$. Without loss of generality let $\lambda_1^{in} \ge 1$ and $\lambda_j^{in}\le 1$ for $j\ge 2$.  We have that 
  \begin{equation*}
    \begin{array}{l}
      \displaystyle\frac{1}{2}\frac{d}{dt}\int \left(\frac{f}{ m }-1\right)^{2} m \,dv
  \le - (d-1-e^{-4dt}(\lambda_{1}^{in}-1)) \int \left[\partial_{v_{1}}(f/ m )\right]^{2}m\;dv \\
  \displaystyle \qquad-\sum_{i=2}^d (d-1)\int \left[\partial_{v_{i}}(f/ m )\right]^{2}m\;dv
    +  \sum_{i=2}^d {{{e^{-4dt}}(1-\lambda_{i}^{in})  \int  \left(\frac{f}{m}-1\right) ^{2} m \,dv}}\\
    \displaystyle \le  - (d-1-e^{-4dt}(\lambda_{1}^{in}-1))\int \left| \nabla(f/ m )\right|^{2}m\;dv+ e^{-4dt}(\lambda_{1}^{in}-1) \int \left(\frac{f}{m}-1\right)^{2}m\,dv   \\
    \displaystyle \le - {(d-1-2e^{-4dt}(\lambda_{1}^{in}-1))} \int \left(\frac{f}{m}-1\right)^{2}m\,dv ,
  \end{array}
  \end{equation*}
  where we have used the fact that $\sum_{i=2}^{d} (1-\lambda_{i}^{in}) =  (\lambda_{1}^{in}-1) $ and the Gaussian Poincare inequality. 
  The ODE
  $$
  y' \le - 2{(d-1-2e^{-4dt}(\lambda_{1}^{in}-1))}y
  $$
  has solution 
  $$
  y(t) \;\le\; y_0 \;e^{-2(d-1)t + \frac{(\lambda_{1}^{in}-1)}{d}( 1 - e^{-4dt})}
  $$
  and this finishes the proof. 
\end{proof}

One can also repeat the computations of Lemma \ref{lem:ineq_lambda_minus1} with the test function $\varphi = (f/m)^{p-1}$and obtain
\begin{eqnarray*}
  &&\frac{1}{p}\frac{d}{dt}\int(f/m)^{p}m\,dv\\
  &&\qquad\leq-\frac{4(p-1)}{p^{2}}\sum_{i}\left[(d-\lambda_{i})\int\left(\partial_{v_{i}}(f/m)^{\frac{p}{2}}\right)^{2}m\,dv+\frac{p}{4}(\lambda_{i}-1)\int(f/m)^{p}v_{i}^{2}m\,dv\right].
\end{eqnarray*}
As before, the quantity inside the bracket is nonpositive when $\lambda_{i}\geq1$.
For the case $\lambda_{i}<1$ we can use Lemma \ref{lem:improved_gauss_poincare} with $\varphi=(f/m)^{\frac{p}{2}}$, which shows
\[
\int(f/m)^{p}v_{i}^{2}m\,dv\leq 2\int\left(\partial_{v_{i}}(f/m)^{\frac{p}{2}}\right)^{2}m\,dv+2\int(f/m)^{p}m\,dv,
\]
yielding
\begin{align*}
  \frac{1}{p}\frac{d}{dt}\int(f/m)^{p}m\,dv &
  \leq-\frac{4(p-1)}{p^{2}}\sum_{\lambda_{i}<1}(d-\lambda_{i})\int\left(\partial_{v_{i}}(f/m)^{\frac{p}{2}}\right)^{2}m\,dv	\\
  & +\frac{2(p-1)}{p}\sum_{\lambda_{i}<1}(1-\lambda_{i})\int\left(\partial_{v_{i}}(f/m)^{\frac{p}{2}}\right)^{2}m\,dv\\
  &+\frac{2(p-1)}{p}\sum_{\lambda_{i}<1}(1-\lambda_{i})\int(f/m)^{p}m\,dv\\
  & \leq-\sum_{\lambda_{i}<1}\left(d-\lambda_{i}-\frac{p}{2}(1-\lambda_{i})\right)\int\left(\partial_{v_{i}}(f/m)^{\frac{p}{2}}\right)^{2}m\,dv\\
  & +\frac{2(p-1)(d-1)}{p}\sup_{i}(1-\lambda_{i})\int(f/m)^{p}m\,dv.
\end{align*}
% \begin{align*}
%   \frac{1}{p}\frac{d}{dt}\int(f/m)^{p}m\,dv & \leq\frac{4(p-1)}{p^{2}}\sum_{\lambda_{i}<1}\bigg [-(d-\lambda_{i})\int\left(\partial_{v_{i}}(f/m)^{\frac{p}{2}}\right)^{2}m\,dv +p(1-\lambda_{i})\cdot\\
%   &\cdot\int\left(\partial_{v_{i}}(f/m)^{\frac{p}{2}}\right)^{2}m\,dv\bigg]
%    +\frac{2(p-1)}{p}\sum_{\lambda_{i}<1}(1-\lambda_{i})\int(f/m)^{p}m\,dv\\
%   & \leq-\sum_{\lambda_{i}<1}\left(d-\lambda_{i}-p(1-\lambda_{i})\right)\int\left(\partial_{v_{i}}(f/m)^{\frac{p}{2}}\right)^{2}m\,dv\\
%   & +\frac{2(p-1)(d-1)}{p}\sup_{i}(1-\lambda_{i})\int(f/m)^{p}m\,dv.
% \end{align*}
% 
Remembering that $\lambda_{i}(t)=1+e^{-4dt}\left(\lambda_{i}^{\textnormal{in}}-1\right)$, we have an inequality of the form
\[
E_{p}'(t)\leq c_{0}e^{-4dt}E_{p}(t),\quad c_{0}=2(p-1)(d-1)\sup_{i}(1-\lambda_{i}^{\textnormal{in}})
\]
for $t\geq t_{0}:=\frac{1}{4d}\log\left(\frac{p+2}{2(d-1)}\sup_i(1-\lambda_i)\right)$, which shows that 
\begin{equation*}
  E_{p}(t)\leq\tilde{c}_{0}E_{p}(t_{0}),\quad \tilde{c}_{0}=\exp\left(\frac{c_{0}}{4d}e^{-4dt_{0}}\right).
\end{equation*}
In summary, we have proven the following:
\begin{proposition}\label{prop:bounded_rel_Lp}
 Let $f$ be solution to (\ref{eq:Landau_max_mol}) with  $\int f_{in}(1,v,|v|^2)\,dv =(1,0,d)$.  Let $m$ be the corresponding unique equilibrium solution, $m(v) :=(2\pi)^{-\frac{d}{2}}e^{-\frac{1}{2}\vert{v}\vert^{2}}$ and consider the relative $L^p$ norm $E_p(t):=\int (f/m)^pm\,dv$. Then there exists a constant $C$ depending on $d$, $p$ and $\lambda_i^\textnormal{in}$ such that
  \begin{equation*}
    E_{p}(t)\leq C E_{p}(t_{0}),\quad t\geq t_{0}:=\frac{1}{4d}\log\left(\frac{p+2}{2(d-1)}\sup_i(1-\lambda_i)\right).
  \end{equation*}
\end{proposition}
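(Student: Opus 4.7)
The plan is to mimic the proof of Theorem \ref{thm:rel_L2_monotone}, replacing the quadratic test function $f/m$ by $(f/m)^{p-1}$, and then to integrate a Grönwall-type inequality. I envision three steps: (i) derive an $L^p$ analogue of Lemma \ref{lem:ineq_lambda_minus1}; (ii) split the sum over indices into the $\lambda_i\geq 1$ and $\lambda_i<1$ cases and absorb the unfavorable moment using the weighted Poincaré inequality of Lemma \ref{lem:improved_gauss_poincare}; (iii) integrate the resulting ODE.

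For step (i), I would start from $\tfrac{1}{p}E_p'(t) = \int(f/m)^{p-1}\partial_t f\,dv$ and use the reformulation (\ref{eq:Landau_MM_linear}). The $|v|^2K(v)$ contribution drops out as a nonpositive quadratic form, since $K(v)\nabla m=0$. For the remaining drift and $\mathbb{P}$ terms I would repeat the computations of $I_1$ and $I_2$ in Lemma \ref{lem:ineq_lambda_minus1} with the new test function; the chain rule and the identity $\nabla(f/m)^{p/2}=\tfrac{p}{2}(f/m)^{p/2-1}\nabla(f/m)$ convert the integrands into quadratic expressions in $\nabla(f/m)^{p/2}$ and $(f/m)^{p/2}$. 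The algebraic cancellations of Lemma \ref{lem:ineq_lambda_minus1}, which rely on $\textnormal{tr}(\mathbb{P}(t))=d$ and $\sum_i(\lambda_i-1)=0$, then produce the inequality displayed just before the proposition's statement, namely
\begin{equation*}
\tfrac{1}{p}E_p'(t)\leq -\tfrac{4(p-1)}{p^2}\sum_i\Big[(d-\lambda_i)\!\int(\partial_{v_i}(f/m)^{p/2})^2 m\,dv + \tfrac{p}{4}(\lambda_i-1)\!\int(f/m)^p v_i^2 m\,dv\Big].
\end{equation*}

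For step (ii), the indices with $\lambda_i(t)\geq 1$ contribute nonpositively and can be discarded. For indices with $\lambda_i(t)<1$ I apply Lemma \ref{lem:improved_gauss_poincare} to $\varphi=(f/m)^{p/2}$ to bound $\int(f/m)^p v_i^2 m\,dv$ by $2\int(\partial_{v_i}(f/m)^{p/2})^2 m\,dv + 2\int(f/m)^p m\,dv$. Collecting coefficients, the gradient term acquires coefficient $d-\lambda_i-\tfrac{p}{2}(1-\lambda_i)$, which is nonnegative precisely when $\tfrac{p+2}{2}(1-\lambda_i(t))\leq d-1$; using $\lambda_i(t)=1+e^{-4dt}(\lambda_i^{\textnormal{in}}-1)$, this holds for $t\geq t_0$ as defined in the statement. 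For such $t$ the remaining estimate reads $E_p'(t)\leq c_0 e^{-4dt}E_p(t)$ with $c_0=2(p-1)(d-1)\sup_i(1-\lambda_i^{\textnormal{in}})$. Step (iii) is then a one-line Grönwall integration on $[t_0,t]$ that yields $E_p(t)\leq E_p(t_0)\exp\bigl(\tfrac{c_0}{4d}e^{-4dt_0}\bigr)$, giving the constant $C$ of the statement.

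The main obstacle is the bookkeeping in step (i): the nonlinear test function $(f/m)^{p-1}$ produces factors of $p-1$ and $p$ at every application of the chain rule, and one must check that the cancellations that made the $L^2$ computation of Lemma \ref{lem:ineq_lambda_minus1} clean still go through; in particular, the one that turns the combined drift and $\mathbb{P}$ contribution into an expression purely in $(\lambda_i-1)$. Once the prefactor $\tfrac{4(p-1)}{p^2}$ is correctly extracted in front of the squared gradient of $(f/m)^{p/2}$, the rest is a routine adaptation of the argument of Theorem \ref{thm:rel_L2_monotone}.
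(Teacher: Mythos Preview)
Your proposal is correct and follows essentially the same approach as the paper: test with $(f/m)^{p-1}$ to obtain the displayed $L^p$ differential inequality, discard the $\lambda_i\geq 1$ terms, apply Lemma~\ref{lem:improved_gauss_poincare} with $\varphi=(f/m)^{p/2}$ for the $\lambda_i<1$ terms, and close with the Gr\"onwall estimate $E_p'(t)\leq c_0 e^{-4dt}E_p(t)$ for $t\geq t_0$. Your identification of the coefficient condition $d-\lambda_i-\tfrac{p}{2}(1-\lambda_i)\geq 0$, the constant $c_0=2(p-1)(d-1)\sup_i(1-\lambda_i^{\textnormal{in}})$, and the final bound $C=\exp\bigl(\tfrac{c_0}{4d}e^{-4dt_0}\bigr)$ all match the paper exactly.
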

\section{Contractivity of Entropic 2-Wasserstein distance}\label{sec:monotonicity_W2}

 In this section we show  that $\mathcal T_\varepsilon(f,g)$ is decreasing as time grows  for $f$ and $g$ any solutions of the Landau equation for Maxwell molecules.
To prove monotonicity of $\mathcal T_\varepsilon(f,g)$ we closely follow the strategy used in \cite{GS24} for the Fisher information. 
Let $q(f)$ be the Landau operator ($\alpha \equiv 1$)
%In the following we summarize the main ideas of the paper for the sake of completeness. As before, we consider the Landau equation with $\alpha\equiv 1$:
% 
\begin{equation}\label{eq:Landau_MM}
   q(f):=\textnormal{div}_{v}\int_{\mathbb{R}^{3}}\left|v-w\right|^{2}K(v-w)\left(\nabla_{v}-\nabla_{w}\right)\left[f(v)f(w)\right]\,dw,
\end{equation}
% 
%where $K(z)=\textnormal{Id}-\frac{z\otimes z}{\left|z\right|^{2}}$
%is the projection matrix to the space $\left\langle z\right\rangle ^{\perp}$. To prove
%that the entropic $2$-Wasserstein distance $\mathcal T_\varepsilon(f,g)$ is monotone we consider the following elliptic operator, which acts on functions $F(v,w)$, $F:\mathbb{R}^{6}\longrightarrow\mathbb{R}$:
% 
and define 
\begin{equation}\label{eq:defn_Q}
  Q(F):=\left(\textnormal{div}_{v}-\textnormal{div}_{w}\right)\Big[\left|v-w\right|^{2}K(v-w)\left(\nabla_{v}-\nabla_{w}\right)F\Big].
\end{equation}
When $F = f(v) f(w)$, the operator $Q(F)$ is connected with $q(f) $ through an integration:
\[
\int Q(f(v)f(w)) \;dw =q(f).%,\quad\left(\pi F\right)(v):=\int F(v,w)\,dw,
\]
In the following we will denote with $(f\otimes f)(v,w)$ the product 
$$
(f\otimes f)(v,w):=f(v)f(w).
$$

%where the tensor product denotes the function $(f\otimes f)(v,w)=f(v)f(w)$. 

The next lemma states that the first variation of the Wasserstein distance between $f$ and $g$ along the Landau flow can be symmetrized, and the resulting expression is the first variation of the Wasserstein distance in double variables along a linear problem.

\begin{proposition}\label{prop:equality_W2_production}
  Consider the operators $q$, $Q$ given in (\ref{eq:Landau_MM}), (\ref{eq:defn_Q}) . Let  $F=f\otimes f$, $G=g\otimes g$. We have
  \begin{equation}\label{eq:dual_form_bHJB}
    \left\langle \mathcal{T}_{\varepsilon}'(f,g),\left(q(f),q(g)\right)\right\rangle =\frac{1}{2}\left\langle \mathcal{T}_{\varepsilon}'(F,G),\left(Q(F),Q(G)\right)\right\rangle.
  \end{equation}
\end{proposition}
\begin{proof}
  To prove (\ref{eq:dual_form_bHJB}) we use the following dual formulation (see for instance \cite{gigli2020benamou}):
  \begin{equation*}
    \mathcal{T}_{\varepsilon}(f,g)=\varepsilon \mathcal H(f)+ \mathcal J(f,g),\quad
    \mathcal J(f,g) := \sup\bigg\{ \int u_{1}g-\int u_{0}f:\underbrace{\partial_{s}u+\frac{1}{2}\left|\nabla u\right|^{2}+\frac{\varepsilon}{2}\Delta u=0}_{\text{bHJB equation}}\bigg\},
  \end{equation*}
  where $\mathcal H(f)$ is the entropy of $f$ and $u$ verifies the bHJB equation. As a first step, we show that the entropy functional verifies the desired equality. On the one hand, we have
  \begin{align*}
    \left\langle \mathcal{H}' \right. & (f), \left.q(f)\right\rangle  =\int\left(\log f(v)+1\right)q(f)\,dv=\int q(f) \log f(v)\,dv	\\
    &=-\iint\left\langle \left|v-w\right|^{2}K(v-w)\frac{\nabla f(v)}{f(v)},(\nabla_{v}-\nabla_{w})\left(f(v)f(w)\right)\right\rangle \,dv\,dw\\
    &=-\frac{1}{2}\iint\left\langle \left|v-w\right|^{2}K(v-w)\left(\frac{\nabla f(v)}{f(v)}-\frac{\nabla f(w)}{f(w)}\right),(\nabla_{v}-\nabla_{w})\left(f(v)f(w)\right)\right\rangle \,dv\,dw\\
    &=-\frac{1}{2}\iint\left|v-w\right|^{2}\left|K(v-w)\left(\frac{\nabla f(v)}{f(v)}-\frac{\nabla f(w)}{f(w)}\right)\right|^{2}f(v)f(w)\,dv\,dw.
  \end{align*}
  On the other hand, we have
  \begin{align*}
    \left\langle \mathcal{H}' \right.& (F)\left.,Q(F)\right\rangle  =\iint\left(\log F(v,w)+1\right)Q(F)(v,w)\,dv\,dw	\\
    &=\iint\left(\log f(v)+\log f(w)\right)Q(F)(v,w)\,dv\,dw \\
    & =-\iint\left\langle \frac{\nabla f(v)}{f(v)}-\frac{\nabla f(w)}{f(w)},\left|v-w\right|^{2}K(v-w)(\nabla_{v}-\nabla_{w})\left(f(v)f(w)\right)\right\rangle \,dv\,dw\\
    & =2\left\langle \mathcal{H}'(f),q(f)\right\rangle .
  \end{align*}
  Now we show that $\mathcal J(f,g)$ lifts in the same way as the entropy by relating the optimal $u$ to the optimal plan. It is known that the optimal plan between $f$ and $g$ is of the form
  \begin{equation*}
    \pi^*(v,w)=\exp\left\{\varepsilon^{-1}\tilde \phi_\varepsilon(v)+ \varepsilon^{-1}\tilde\psi_\varepsilon(w)\right\}r_{\frac{\varepsilon}{2}}(v,w),
  \end{equation*}
  where $r_t(v,w)$ denotes the heat kernel at the time $t$ and $\tilde \phi_\varepsilon$, $\tilde \psi_\varepsilon$ are the Schrodinger pontentials, which converge to the Kantorovich potentials $\tilde \phi$, $\tilde \psi$ for the cost $|v-w|^2$ as $\varepsilon\to 0$. Moreover, $\pi^*$ is the only plan $\pi\in\Pi(f,g)$ such that $\displaystyle\frac{d\pi}{dr_{\frac{\varepsilon}{2}}}$ is tensorized and the Kantorovich potentials provide the optimal $u$ for the dynamic formulation $\mathcal J(f,g)$, given by
  \begin{equation*}
    u=\varepsilon\log\left[h_{\frac{(1-s)\varepsilon}{2}}e^{\varepsilon^{-1}\tilde\psi_{\varepsilon}}\right],
  \end{equation*}
  where $h_t$ denotes the heat flow so that $\partial_t (h_tf) = \Delta (h_tf)$ and $h_0=\textnormal{Id}$. If we double the variables and consider $F=f\otimes f$, $G=g\otimes g$ then the optimal plan would be $\pi^* \otimes \pi^*$ and the optimal solution for $\mathcal J(F,G)$ would be $u(v)+u(w)$ giving:
  \begin{align*}
    \left\langle \mathcal{J}'(F,G),\left(Q(F),Q(G)\right)\right\rangle &
    =\iint\Big[\left(u_{1}(v)+u_{1}(w)\right)Q(G)-\left(u_{0}(v)+u_{0}(w)\right)Q(F)\Big]\,dv\,dw	\\
    &=2\iint u_{1}(v)Q(G)\,dv\,dw-2\iint u_{0}(v)Q(F)\,dv\,dw\\
    & =2\int u_{1}(v)\textnormal{div}_{v}\int\left|v-w\right|^{2}K(v-w)\left(\nabla_{v}-\nabla_{w}\right)G\,dw\,dv\\
    & -2\int u_{0}(v)\textnormal{div}_{v}\int\left|v-w\right|^{2}K(v-w)\left(\nabla_{v}-\nabla_{w}\right)F\,dw\,dv\\
    & =2\int u_{1}(v)q(g)\,dv-2\int u_{0}(v)q(f)\,dv\\
    & =2\left\langle \mathcal{J}'(f,g),\left(q(f),q(g)\right)\right\rangle,
  \end{align*}
  which finishes the proof.
\end{proof}

What the above proposition shows is that computing  $\frac{d}{dt} \mathcal T_\varepsilon(f,g)$ is equivalent to computing $\frac{d}{dt} \mathcal T_\varepsilon(F,G) _{|_{t =0}}$ where $F$ and $G$ solve the linear problem 
$$
\partial_t U = Q(U)
$$ 
with initial conditions respectively $f(v) f(w)$ and $g(v)g(w)$.  Therefore in what follows we show that 
$$
 \frac{d}{dt} \mathcal T_\varepsilon(F,G) _{|_{t =0}} \le 0.
$$
As a first step, we rewrite the operator $Q(\cdot)$ defined in (\ref{eq:defn_Q}) as composition of two linear operators. We recall the vector fields $b_k$ 
\begin{equation}\label{eq:defn_b_k}
b_{1}(v-w)=\begin{pmatrix}0\\
w_{3}-v_{3}\\
v_{2}-w_{2}
\end{pmatrix},\quad b_{2}(v-w)=\begin{pmatrix}v_{3}-w_{3}\\
0\\
w_{1}-v_{1}
\end{pmatrix},\quad b_{3}(v-w)=\begin{pmatrix}w_{2}-v_{2}\\
v_{1}-w_{1}\\
0
\end{pmatrix},%\quad v,w\in\mathbb{R}^{3}.
\end{equation}
and 
$\tilde{b}_{k}(v-w)=\begin{pmatrix}b_{k}\\-b_{k}\end{pmatrix}$.  The vectors $b_k$  span the subspace $\left\langle v-w\right\rangle ^{\perp}$
and verify 
$$b_{1}\otimes b_{1}+b_{2}\otimes b_{2}+b_{3}\otimes b_{3}=|v-w|^2K(v-w).$$
Using that $\tilde{b}_{k}(v-w)$ is divergence free and perpendicular to $\begin{pmatrix}v-w\\w-v\end{pmatrix}$ it is easy to obtain the expression
\begin{equation}\label{eq:Q_sum_Lb_k}
  Q(F)=\sum_{k=1}^{3}\tilde{b}_{k}\cdot\nabla\left(\tilde{b}_{k}\cdot\nabla F\right)=\sum_{k=1}^{3}\left(L_{\tilde{b}_{k}}\circ L_{\tilde{b}_{k}}\right)F,
\end{equation}
where 
$$
L_{b}(F):=b\cdot\nabla F.
$$
Here the key observation is that the flow of the vector field $\tilde{b}_{k}\left(v-w\right)$ induces an isometry in $\mathbb{R}^6$. That is, $\begin{pmatrix}v(t)\\w(t)\end{pmatrix}$, $\begin{pmatrix}\bar{v}(t)\\\bar{w}(t)\end{pmatrix}$ are solutions of the ODE $\begin{pmatrix}
  \dot{v}\\
  \dot{w}
\end{pmatrix}
=\tilde{b}_{k}\left(v-w\right)$
then:
\begin{align*}
  \frac{1}{2}\frac{d}{dt}
  \left|
    \begin{pmatrix}v\\w\end{pmatrix}
    -
    \begin{pmatrix}\bar{v}\\\bar{w}\end{pmatrix}
    \right|^{2}	=0.
\end{align*}
Consequently, the entropic $2$-Wasserstein distance will remain unchanged if we transport two densities along $\tilde b_k$, as stated in the following lemma:
\begin{lemma}\label{lem:flow_isometries}
  Consider two functions $F_k(v,w,t)$, $G_k(v,w,t)$ satisfying the equations
  \begin{equation}\label{eq:evol_Lk}
    \begin{cases}
      \partial_{t}F_{k}=L_{\tilde{b}_{k}}F_{k}=\tilde{b}_{k}(v-w)\cdot\nabla F_{k}\\F_{k}(v,w,0)=F(v,w)
    \end{cases},\quad
    \begin{cases}
      \partial_{t}G_{k}=L_{\tilde{b}_{k}}G_{k}=\tilde{b}_{k}(v-w)\cdot\nabla G_{k}\\ G_{k}(v,w,0)=G(v,w)
    \end{cases}.
  \end{equation}
  Then $t\mapsto \mathcal T_\varepsilon(F_k,G_k)$ is constant, which implies $\left\langle \mathcal T_\varepsilon'(F_k,G_k), (L_{\tilde{b}_k} F_k,L_{\tilde{b}_k} G_k) \right\rangle =0$.
\end{lemma}
\begin{proof}
  Firstly note that we can write (\ref{eq:evol_Lk}) as continuity equations using that $\tilde b_k$ are divergence free:
  \begin{equation*}
    \partial_t F_k + \textnormal{div}\left(-\tilde b_k F_k\right) = 0,\quad 
    \partial_t G_k + \textnormal{div}\left(-\tilde b_k G_k\right) = 0.
  \end{equation*}
  Then it is known (see \cite[Chap. 4]{santambrogio2015OT}) that $F_k(\cdot, t) = (X_t)_\#F$, $G_k(\cdot ,t)=(X_t)_\#G$ where $X_t(v,w)$ represents the flow of the ODE $\begin{pmatrix}\dot v\\\dot w\end{pmatrix}=-\tilde{b}_{k}(v-w)$. By the previous Remark, we see that $X_t$ is an isometry in $\mathbb{R}^6$, which shows
  \begin{equation*}
    \mathcal{T}_{\varepsilon}\left(F_{k}(\cdot,t),G_{k}(\cdot,t)\right)=\mathcal{T}_{\varepsilon}\left((X_{t})_{\#}F,(X_{t})_{\#}G\right)=\mathcal{T}_{\varepsilon}(F,G).\qedhere
  \end{equation*}
\end{proof}
%
% \begin{proof}
%   First we observe that $\tilde{b}_{k}(v-w)$ is a divergence free vector in $\mathbb{R}^{6}$ because
%   \begin{equation*}
%     \textnormal{div}_{v,w}\big(\tilde{b}_{k}(v-w)\big)=\textnormal{div}_v\,b_{k}(v-w)-\textnormal{div}_w\,b_{k}(v-w)=0
%   \end{equation*}
%   where we used that $z\mapsto b_{k}(z)$ is also divergence free in $\mathbb{R}^{3}$. Then, we can write the equations satisfied by $F_{k}$, $G_k$ as continuity equations:
%   \begin{equation*}
%     \partial_{t}F_{k}+\textnormal{div}_{v,w}\big(-\tilde{b}_{k}F_{k}\big)=0,\qquad
%     \partial_{t}G_{k}+\textnormal{div}_{v,w}\big(-\tilde{b}_{k}G_{k}\big)=0.
%   \end{equation*}
%   This allows us \cite{santambrogio2015OT} to differentiate the 2-Wasserstein distance with the formula
%   \begin{equation*}
%     \frac{1}{2}\frac{d}{dt} \left[d_{2}^2(F_{k},G_k)\right]=\iint_{\mathbb{R}^{6}}\left\langle \tilde{b}_{k}(v-w)-\tilde{b}_{k}(\bar{v}-\bar{w}),\begin{pmatrix}
%       v\\
%       w
%     \end{pmatrix}-\begin{pmatrix} 
%       \bar{v}\\
%       \bar{w}
%     \end{pmatrix}\right\rangle\,d\gamma^{*}\left((v,w),(\bar{v},\bar{w})\right),
%   \end{equation*}
%   where $\gamma^{*}$ is the optimal plan between $F_{k}$ and $G_k$. The integrand in the previous formula is $0$, so the proof is finished.
% \end{proof}

Now we are ready to prove the monotonicity of $\mathcal T_\varepsilon(f,g)$ where $f$, $g$ are solutions of the Landau equation for Maxwell molecules.
\begin{proof}[Proof of Theorem \ref{thm:w2_dissipation}]
  Due to Proposition \ref{prop:equality_W2_production}, it is enough to show that ${\left.\frac{d}{dt}\right|_{t=0}\mathcal T_\varepsilon(F,G)\leq0}$ where $F$, $G$ are solutions of the lifted equations $\partial_{t}F=Q(F)$, $\partial_tG=Q(G)$. We rewrite this as
  \begin{equation*}
    \left\langle \mathcal T_\varepsilon'(F,G), (Q(F),Q(G)) \right\rangle \leq 0.
  \end{equation*}
  Consider for $k=1,2,3$ solutions of
  \begin{equation*}
    \begin{cases}
      \partial_{t}F_{k}(v,w,t)=L_{\tilde{b}_{k}}(F_k)\\
      F_{k}(v,w,0)=F(v,w)
    \end{cases},\quad
    \begin{cases}
      \partial_{t}G_{k}(v,w,t)=L_{\tilde{b}_{k}}(G_k)\\
      G_{k}(v,w,0)=G(v,w)
    \end{cases}.
  \end{equation*}
  Since $\mathcal T_\varepsilon(F_{k},G_k)$ is constant, we deduce that
  \begin{align*}
    0 & =\left.\frac{d^{2}}{dt^{2}}\right|_{t=0}\mathcal T_\varepsilon(F_{k},G_k) \\
    & =\left\langle \mathcal T_\varepsilon'(F,G),\big(\partial_{tt}F_{k}(0),\partial_{tt}G_{k}(0)\big)\right\rangle +
    \left\langle \mathcal T_\varepsilon''(F,G)\big(\partial_{t}F_{k}(0),\partial_{t}G_{k}(0)\big),\big(\partial_{t}F_{k}(0),\partial_{t}G_{k}(0)\big)\right\rangle \\
    & =\left\langle \mathcal T_\varepsilon'(F,G),\big((L_{\tilde{b}_{k}}\circ L_{\tilde{b}_{k}})F,
    (L_{\tilde{b}_{k}}\circ L_{\tilde{b}_{k}})G\big)\right\rangle +
    \left\langle \mathcal T_\varepsilon''(F,G)\big(L_{\tilde{b}_{k}}F,L_{\tilde{b}_{k}}G\big),\big(L_{\tilde{b}_{k}}F,L_{\tilde{b}_{k}}G\big)\right\rangle.
  \end{align*}
  Then, the expression (\ref{eq:Q_sum_Lb_k}) for the operator $Q$ shows
  \begin{align}
    \left\langle \mathcal T_\varepsilon'(F,G), (Q(F),Q(G)) \right\rangle  &
     =\sum_{k=1}^{3}\left\langle \mathcal T_\varepsilon'(F,G),\big((L_{\tilde{b}_{k}}\circ L_{\tilde{b}_{k}})F,(L_{\tilde{b}_{k}}\circ L_{\tilde{b}_{k}})G\big)\right\rangle  \nonumber\\
    & =-\sum_{k=1}^{3}\left\langle \mathcal T_\varepsilon''(F,G)\big(L_{\tilde{b}_{k}}F,L_{\tilde{b}_{k}}G\big),\big(L_{\tilde{b}_{k}}F,L_{\tilde{b}_{k}}G\big)\right\rangle \leq 0, \label{form1}
  \end{align}
  where we have used that $\mathcal T_\varepsilon$ is linearly convex, as evidenced by the dual formulation (\ref{eq:dual_form_bHJB}). 
\end{proof}

\textbf{Remark.}
Inequality (\ref{form1}) would provide an explicit expression of the Wasserstein dissipation if one has an explicit formula for $\mathcal T_\varepsilon''(F,G)$.  For $\varepsilon >0$ finding this explicit expression is still an open problem. For $\varepsilon =0$ the first and second author  have formally proven in  \cite{CD25} that 
\begin{align}
   -\sum_{k=1}^{3} &\left\langle \mathcal T''(F,G)\big(L_{\tilde{b}_{k}}(F),L_{\tilde{b}_{k}}(G)\big),\big(L_{\tilde{b}_{k}}(F),L_{\tilde{b}_{k}}(G)\big)\right\rangle	\nonumber\\
    & =-2\sum_{k=1}^{3}\int\left | D^{2}\Phi(v,w)^{-\frac12}\nabla \xi^f_{k}(v,w)-D^{2}\Psi(\overline{v},\overline{w})^{-\frac12}\nabla \xi^g_{k}(\overline{v},\overline{w})\right |^2  
    \Pi_{*}\left[\begin{pmatrix}v\\w\end{pmatrix},\begin{pmatrix}\overline{v}\\\overline{w}\end{pmatrix}\right],\label{exp_next_paper}
  \end{align}
  where $\xi_k^f$, $\xi_k^g$ solve
  \begin{align}
    -\textnormal{div}\Big(f(v)f(w)D^{2}\Phi(v,w)^{-1}\nabla \xi_{k}^{f}(v,w)\Big) &
    =\tilde{b}_{k}(v-w)\cdot\nabla\left(f(v)f(w)\right), \label{eq:aux_maxewllian}	\\
    -\textnormal{div}\Big(g(v)g(w)D^{2}\Psi(v,w)^{-1}\nabla \xi_{k}^{g}(v,w)\Big)&
    =\tilde{b}_{k}(v-w)\cdot\nabla\left(g(v)g(w)\right), \nonumber
  \end{align}
  and $\Phi$, $\Psi$ are the convex functions mapping $(\nabla\Phi)_\#(f\otimes f) = g\otimes g$, $(\nabla \Psi)_\#(g\otimes g) = f\otimes f$. Formula (\ref{exp_next_paper})  shows that the distance to the Maxwellian $\mathcal T(f,m)$ is strictly decreasing in time unless $f=m$. Indeed, assume $\frac{d}{dt}\mathcal T (f,m) = 0$, then we have that
  $$
  0=\frac{d}{dt} \mathcal T(f,m) = - \frac{1}{2} \sum_{k=1}^3
  \int \left\langle D^2\Phi(v,w)\nabla \xi_k^f, \nabla \xi_k^f  \right\rangle f(v)f(w) \,dv\,dw,
  $$
  where we are using that $\nabla \xi_k^m \equiv 0$. This shows that $\nabla \xi_k^f \equiv 0$ as well. Then, remembering that $b_k(v-w)$, $k=1,2,3$ spans $(v-w)^\perp$, (\ref{eq:aux_maxewllian}) implies that
  $$
  K(v-w)(f(w)\nabla f(v) - f(v)\nabla f(w)) = 0. 
  $$
  The above identity is satisfied if and only if $ f =m$ (among the functions with mass $1$, second moment $3$ and first momentum equal to $0$).
 % so $f$ is a steady state of the Landau equation and thus a Maxwellian.

  %. Finding an analogous expression for $\varepsilon>0$ remains as an interesting open problem.

\textbf{Remark.}
The method proposed in this section is not suitable when $\alpha  \not\equiv 1$. As noted  in \cite{GS24},  the flow of the ODE
\begin{equation*}
  \begin{pmatrix}\dot{v}\\
    \dot{w}
  \end{pmatrix}=
 \sqrt{ \alpha\left(|v-w|\right)}\tilde{b}_{k}\left(v-w\right),
\end{equation*}
no longer generates an isometry and Lemma \ref{lem:flow_isometries} does not hold. The next section provides an alternative method that adapts to any $\alpha$.

% Therefore, a different method needs to be used to study the non-Maxwellian case $\alpha\not \equiv 1$. This is the content of the next section.

%%%%%%%%%%%%%%%%%%%%%%%%%%%%%%%%%%%%%%%%%%%%%%%%%%%%%%%%%%%%%%%%%%%%%%%%%%%
%%%%%%%%%%%%%%%%%%%%%%%%%%%%%%%%%%%%%%%%%%%%%%%%%%%%%%%%%%%%%%%%%%%%%%%%%%%
\section{Proof of Theorem \ref{thm:w2_dissipation}: second method}\label{sec:w2_method2}

In this section we prove Theorem \ref{thm:w2_dissipation} using a different method. During the proof we will introduce tools that will be also useful for the proof of Theorem \ref{thm:w2_estimate_non_Coulomb}; therefore we will keep the term $\alpha$ in the computations. 

{\em{Intuition using the heat equation.}}  Before starting our computation on the Landau equation, we illustrate our intuition by presenting our method applied to the heat equation. The argument is based on the dual formulation of the metric  (see \cite{gigli2020benamou}):
\begin{equation}\label{eq:bhj_formulation}
  \mathcal{T}(f,g)= 
  \sup\Big\{ 
    \int u_1g\,dv-\int u_0f \,dv:\underbrace{\partial_{s}u+\frac{1}{2}\left|\nabla u\right|^{2} =0}_{\text{HJ equation}}
    \Big\}.
\end{equation}
We have the following proposition: 
\begin{proposition*}
  Consider two nonnegative solutions in $\mathbb{R}^d$ of the heat equation $\partial_t f = \Delta f$, $\partial_t g = \Delta g$, both with mass $1$. Then the Wasserstein distance $\mathcal T(f,g)$ is decreasing with dissipation
  \begin{equation*}
    \frac{d}{dt}\mathcal T(f,g) = -\int_0^1\int_{\mathbb{R}^d}\left|D^2u\right|^2\rho(x)\,dx\,ds,
  \end{equation*}
  where $\rho(x,s)$, $0\leq s\leq 1$ is an interpolation between $f$ and $g$ given by the equations
  \begin{equation}\label{eq:CE_heat}
    \partial_{s}\rho+\textnormal{div}\left(\rho\nabla u\right)=0,\qquad\rho_{0}=f,\,\rho_{1}=g,\qquad\partial_{s}u+\frac{1}{2}\left|\nabla u\right|^{2}=0.
  \end{equation}
\end{proposition*}
\begin{remark}
  The previous result is analogous to the estimate of contractivity for finite dimensional gradient flows. Given a smooth function $F:\mathbb{R}^d\to\mathbb{R}$, consider two solutions of the gradient flow $\dot x = -\nabla F(x)$, $\dot y = -\nabla F(y)$. Then the squared Euclidean distance $|x-y|^2$ is decreasing with dissipation
  $$
  \frac{1}{2}\frac{d}{dt}|x-y|^2=- \langle\nabla F(p_0),\dot{p}_0\rangle+\langle\nabla F(p_1),\dot{p}_1\rangle = -\int_0^1 D^2F(p_s)[\dot{p}_s,\dot{p}_s]\,ds,
  $$
  where $p_s = sx + (1-s)y$, $0\leq s\leq 1$ is the constant speed geodesic between $x$ and $y$.
\end{remark}

\begin{proof}
  We use the dual formulation of the Wasserstein distance (\ref{eq:bhj_formulation}) 
  It is known that the optimal $u$ provides an interpolation between $f$ and $g$ given by the equation (\ref{eq:CE_heat}). Therefore, we have
  \begin{align*}
    \frac{d}{dt}\mathcal{T}(f,g) &
    =\int_{\mathbb{R}^{d}}u_{1}\Delta g\,dx-\int_{\mathbb{R}^{d}}u_{0}\Delta f\,dx	\\
    &=\int_{0}^{1}\frac{d}{ds}\int_{\mathbb{R}^{d}}\Delta u\rho\,dx\,ds \\
    & =-\int_{0}^{1}\int_{\mathbb{R}^{d}}\left(\frac{1}{2}\Delta\left|\nabla u\right|^{2}\rho+\Delta u\,\textnormal{div}\left(\rho\nabla u\right)\right)\,dx\, ds\\
    & =-\int_{0}^{1}\int_{\mathbb{R}^{d}}\left(\frac{1}{2}\Delta\left|\nabla u\right|^{2}-\nabla\Delta u\cdot\nabla u\right)\rho\,dx\, ds\\
    & =-\frac{1}{2}\int_{0}^{1}\int_{\mathbb{R}^{d}}\left|D^{2}u\right|^{2}\rho\,dx\,ds,
  \end{align*}
  where we have used Bochner's formula $\frac{1}{2}\Delta\left|\nabla u\right|^{2}=\nabla\Delta u\cdot\nabla u+\left|D^{2}u\right|^{2}$.
\end{proof}

The $2$-Wasserstein contractivity of the heat equation is a classical result. In fact, Jordan, Kinderlehrer and Otto showed in \cite{JKO1998FokkerPlanck} how the heat equation could be understood as a gradient flow for the entropy functional in the $2$-Wasserstein space. Contractivity without a rate formally follows by the displacement convexity of the entropy functional.

Let's now use the same method for the Landau equation. We start by computing  $\frac{d}{dt}\mathcal T(f,g)$ ($\varepsilon =0$) where $f,g$ are solutions of the Landau equation for a general $\alpha$.  We recall that $ \mathcal{T}(f,g)$ is defined in  (\ref{eq:bhj_formulation}). Moreover, the supremum is attained at a function $u$ which provides the constant speed geodesic $\rho_s(v)=\rho(v,s)$, $0\leq s\leq 1$ between $f$ and $g$ through the continuity equation
\begin{equation}\label{eq:CE}
  \partial_{s}\rho+\textnormal{div}\left(\rho\nabla u\right)=0,\quad\rho_{0}=f,\quad\rho_{1}=g.
\end{equation}

\begin{lemma}\label{deriv_Wass2_eps=0}
Let $f$, $g$ be two smooth solutions to (\ref{L}), we have 
\begin{align*}%\label{eq:deriv_w2_final}
  \frac{d}{dt}\mathcal T (f,g)  
  = &	-\frac{1}{2}\sum_{k=1}^{3}\int_{0}^{1}\iint\left|\nabla\left(\tilde{b}_{k}\cdot\nabla u\right)\right|^{2}\alpha\rho\,dv\,dw\,ds\\
  & +\frac{1}{2}\sum_{k=1}^{3} \int_0^1\iint\frac{\alpha'}{r}\left(\tilde{b}_{k}\cdot\nabla\left(\tilde{b}_{k}\cdot\nabla u\right)\right)\left(\hat{n}\cdot\nabla u\right)\rho\,dv\,dw\,ds.
  \nonumber
\end{align*}
\end{lemma}

\begin{proof}
% \begin{proof}[Proof of \cref{thm:w2_dissipation} for $\varepsilon=0$.]

Using Proposition \ref{prop:equality_W2_production} and the dual formulation (\ref{eq:bhj_formulation}) we see that 
\begin{equation*}
  \frac{d}{dt}\mathcal{T}(f,g)=\frac{1}{2}\iint u_{1}Q(G)\,dv\,dw-\frac{1}{2}\iint u_{0}Q(F)\,dv\,dw,
\end{equation*}
where $F(v,w)=f(v)f(w)$, $G(v,w)=g(v)g(w)$ and $u(v,w,s)$ is a solution in $\mathbb{R}^6$ of the Hamilton-Jacobi equation from (\ref{eq:bhj_formulation}). We note that the previous expression can be rewritten as
\begin{equation*}
  \frac{d}{dt}\mathcal{T}(f,g)=\frac{1}{2}\int_{0}^{1}\frac{d}{ds}\iint u Q(\rho)\,dv\,dw\,ds,
\end{equation*}
where  $\rho$ is the Wasserstein geodesic interpolating $F$ and $G$.  To lighten notation, we drop the integration with respect to $s$ and focus on rewriting $\frac{d}{ds}\iint uQ(\rho)\,dv\,dw\,ds$. For this purpose, we use the decomposition
\begin{equation*}
  Q(F)=\alpha\sum_{k=1}^{3}\tilde{b}_{k}\cdot\nabla\left(\tilde b_{k}\cdot\nabla F\right),
\end{equation*}
where from now on $\nabla$ and $\textnormal{div}$ denote gradient and divergence with respect to $v,w$. We will repeatedly use that $\alpha \tilde b_k$ is divergence free, which shows that $Q$ is self-adjoint:
\begin{align*}
  \frac{d}{ds}\iint & uQ(\rho)  \,dv\,dw =\frac{d}{ds}\iint Q(u)\rho\,dv\,dw\\
  & =-\sum_{k}\iint\left[\alpha\tilde{b}_{k}\cdot\nabla\left(\tilde{b}_{k}\cdot\nabla u\right)\textnormal{div}\left(\rho\nabla u\right)+\frac{1}{2}\alpha\tilde{b}_{k}\cdot\nabla\left(\tilde{b}_{k}\cdot\nabla\left|\nabla u\right|^{2}\right)\rho\right]\,dv\,dw.
\end{align*}
Introducing $r=\left|v-w\right|$ and $\hat n = \begin{pmatrix}v-w\\w-v\end{pmatrix}$, we see that $\nabla \alpha(r) = \frac{\alpha'(r)}{r}\hat n$ so that
\begin{eqnarray}
  \frac{d}{dt}\mathcal{T}(f,g)&=&\frac{1}{2}\sum_{k}\iint\left[\nabla\left(\tilde{b}_{k}\cdot\nabla\left(\tilde{b}_{k}\cdot\nabla u\right)\right)\cdot\nabla u-\frac{1}{2}\tilde{b}_{k}\cdot\nabla\left(\tilde{b}_{k}\cdot\nabla\left|\nabla u\right|^{2}\right)\right]\alpha\rho\nonumber	\\
  && +\frac{1}{2}\sum_{k}\iint\frac{\alpha'}{r}\left(\tilde{b}_{k}\cdot\nabla\left(\tilde{b}_{k}\cdot\nabla u\right)\right)\left(\hat{n}\cdot\nabla u\right)\rho. \label{eq:aux1}
\end{eqnarray}
Our next goal is to rewrite (\ref{eq:aux1}) as 
%Next, {\color{red}{ we further  the expression obtained in \textit{Step 1.}}}  to obtain the final formula
%
\begin{eqnarray}
  \frac{d}{dt}\mathcal{T}(f,g)&=&-\frac{1}{2}\sum_{k=1}^{3}\int_{0}^{1}\iint\left|\nabla\left(\tilde{b}_{k}\cdot\nabla u\right)\right|^{2}\alpha\rho\,dv\,dw\,ds\nonumber\\
  & & +\frac{1}{2}\sum_{k=1}^{3} \int_0^1\iint\frac{\alpha'}{r}\left(\tilde{b}_{k}\cdot\nabla\left(\tilde{b}_{k}\cdot\nabla u\right)\right)\left(\hat{n}\cdot\nabla u\right)\rho\,dv\,dw\,ds.\label{eq:final_dissipation}
\end{eqnarray}

%\hspace{0.3cm}

The last term in \eqref{eq:aux1} already matches \eqref{eq:final_dissipation}, so we focus on the first term of the right hand side of \eqref{eq:aux1}. We decompose it using the basis vectors 
\begin{equation*}
  \tilde{e}_{i}=\begin{pmatrix}e_{i}\\e_{i}\end{pmatrix}, \quad
  \hat{e}_{i}=\begin{pmatrix}e_{i}\\-e_{i}\end{pmatrix},\quad 1\leq i \leq 3,
\end{equation*}
where $e_i$ form the canonical basis of $\mathbb{R}^3$. More precisely, we use the decompositions
\begin{align*}
  \nabla\left(\tilde{b}_{k}\cdot\nabla\left(\tilde{b}_{k}\cdot\nabla u\right)\right)\cdot\nabla u
  = & \; \frac{1}{2}\sum_{i=1}^{3}\left(\tilde{e}_{i}\cdot\nabla\left(\tilde{b}_{k}\cdot\nabla\left(\tilde{b}_{k}\cdot\nabla u\right)\right)\right)\left(\tilde{e}_{i}\cdot\nabla u\right)	\\
  & +\frac{1}{2}\sum_{i=1}^{3}\left(\hat{e}_{i}\cdot\nabla\left(\tilde{b}_{k}\cdot\nabla\left(\tilde{b}_{k}\cdot\nabla u\right)\right)\right)\left(\hat{e}_{i}\cdot\nabla u\right),
\end{align*}
and
\begin{equation*}
  \tilde{b}_{k}\cdot\nabla\left(\tilde{b}_{k}\cdot\nabla\left|\nabla u\right|^{2}\right)=\frac{1}{2}\sum_{i=1}^{3}\tilde{b}_{k}\cdot\nabla\left(\tilde{b}_{k}\cdot\nabla\left(\hat{e}_{i}\cdot\nabla u\right)^{2}\right)+\frac{1}{2}\sum_{i=1}^{3}\tilde{b}_{k}\cdot\nabla\left(\tilde{b}_{k}\cdot\nabla\left(\hat{e}_{i}\cdot\nabla u\right)^{2}\right).
\end{equation*}
This allows us to rewrite \eqref{eq:aux1} as
\begin{align}
  \label{eq:decomp_I1_I2}
  \frac{d}{dt}\mathcal{T}(f,g)
  =& \; \frac{1}{4}\sum_{i,k}\int_0^1\left[I_{1}(\tilde{e}_{i},k)+I_{2}(\tilde{e}_{i},k)\right]\, ds
  +\frac{1}{4}\sum_{i,k}\int_0^1\left[I_{1}(\hat{e}_{i},k)+I_{2}(\hat{e}_{i},k)\right]\,ds	\\
  & +\frac{1}{2}\sum_{k}\int_0^1\iint\frac{\alpha'}{r}\left(\tilde{b}_{k}\cdot\nabla\left(\tilde{b}_{k}\cdot\nabla u\right)\right)\left(\hat{n}\cdot\nabla u\right)\rho\,ds,
  \nonumber
\end{align}
where we set for a fixed vector $e\in \mathbb{R}^6$ the quantities
\begin{align*}
  I_{1}(e,k) & :=\iint e\cdot\nabla\left(\tilde{b}_{k}\cdot\nabla\left(\tilde{b}_{k}\cdot\nabla u\right)\right)\left(e\cdot\nabla u\right)\alpha \rho,\\
  I_{2}(e,k) & :=-\frac{1}{2}\iint\tilde{b}_{k}\cdot\nabla\left(\tilde{b}_{k}\cdot\nabla\left[(e\cdot\nabla u)^{2}\right]\right)\alpha\rho.
\end{align*}
The advantage of using $\tilde{e}_{i}$, $\hat{e}_{i}$ over the canonical basis is the simplicity of the Lie brackets $[\tilde e_i,\tilde b_k]$, $[\hat e_i,\tilde b_k]$, which we will call commutators from now on. We remind the reader that if $a=a(v)$, $b=b(v)$ are vector fields then Lie bracket $[a,b]$ is another vector field that verifies the identity
\begin{equation*}
  a\cdot\nabla\left(b\cdot\nabla u\right)-b\cdot\nabla\left(a\cdot\nabla u\right)=[a,b]\cdot\nabla u
\end{equation*}
for any smooth function $u$. The components of $[a,b]$ are defined as
\begin{equation*}
  [a,b]_{i}=\sum_{j}\left(a_{j}\partial_{v_{j}}b_{i}-b_{j}\partial_{v_{j}}a_{i}\right)=a\cdot\nabla b_{i}-b\cdot\nabla a_{i}.
\end{equation*}
We calculate, all the possible commutators. We have that $[\tilde e_i,\tilde b_k]=0$ for every $i$ and $k$, and that $[\hat e_i,\tilde b_k]$ satisfies:
  \vspace*{-2mm}
  \begin{table}[h!]
    \centering%
    \begin{tabular}{c|c|c|c} 
      $[\hat{e}_{i},\tilde{b}_{k}]$ & $\tilde{b}_{1}$ & $\tilde{b}_{2}$ & $\tilde{b}_{3}$\tabularnewline
      \hline 
      $\hat{e}_{1}$ & $0$ & $-2\hat{e}_{3}$ & $2\hat{e}_{2}$\tabularnewline
      \hline 
      $\hat{e}_{2}$ & $2\hat{e}_{3}$ & $0$ & $-2\hat{e}_{1}$\tabularnewline
      \hline 
      $\hat{e}_{3}$ & $-2\hat{e}_{2}$ & $2\hat{e}_{1}$ & $0$\tabularnewline
    \end{tabular}
  \end{table}
  \vspace*{-2mm}

Now we rewrite $I_1(e,k)$ by commuting $e$, $\tilde b_k$ and integrating by parts keeping in mind that $\textnormal{div}(\alpha\tilde b_k)=0$:
%
% \begin{align*}
%   I_{1}(e,k) &
%   % =\iint\left(e\cdot\left(\nabla\tilde{b}_{k}\cdot\nabla\left(\tilde{b}_{k}\cdot\nabla u\right)\right)\right)\left(e\cdot\nabla u\right)\alpha\rho	\\
%   % &=\iint\left(\tilde{b}_{k}\cdot\left(\nabla e\cdot\nabla\left(\tilde{b}_{k}\cdot\nabla u\right)\right)\right)\left(e\cdot\nabla u\right)\alpha\rho
%   % +\iint\left(\left[e,\tilde{b}_{k}\right]\cdot\nabla\left(\tilde{b}_{k}\cdot\nabla u\right)\right)\left(e\cdot\nabla u\right)\alpha\rho,\\
%   % & =-\iint\left(e\cdot\nabla\left(\tilde{b}_{k}\cdot\nabla u\right)\right)\left(\tilde{b}_{k}\cdot\nabla\left(\left(e\cdot\nabla u\right)\rho\right)\right)\alpha
%   % +\iint\left(\left[e,\tilde{b}_{k}\right]\cdot\nabla\left(\tilde{b}_{k}\cdot\nabla u\right)\right)\left(e\cdot\nabla u\right)\alpha\rho\\
%   =-\iint\left(e\cdot\nabla\left(\tilde{b}_{k}\cdot\nabla u\right)\right)\left(\tilde{b}_{k}\cdot\nabla\left(e\cdot\nabla u\right)\right)\alpha\rho
%   -\iint\left(e\cdot\nabla\left(\tilde{b}_{k}\cdot\nabla u\right)\right)\left(\tilde{b}_{k}\cdot\nabla\rho\right)\left(e\cdot\nabla u\right)\alpha\\
%   & +\iint\left(\left[e,\tilde{b}_{k}\right]\cdot\nabla\left(\tilde{b}_{k}\cdot\nabla u\right)\right)\left(e\cdot\nabla u\right)\alpha\rho,
% \end{align*}
% %
% and commuting $e$, $\tilde b_k$ again gives
% 
\begin{align*}
  I_{1}(e,k) &
  =-\iint\left(e\cdot\nabla\left(\tilde{b}_{k}\cdot\nabla u\right)\right)^{2}\alpha\rho
  -\iint\left(e\cdot\nabla\left(\tilde{b}_{k}\cdot\nabla u\right)\right)\left(\tilde{b}_{k}\cdot\nabla\rho\right)\left(e\cdot\nabla u\right)\alpha\\
  & +\iint\left([e,\tilde{b}_{k}]\cdot\nabla\left(\tilde{b}_{k}\cdot\nabla u\right)\right)\left(e\cdot\nabla u\right)\alpha\rho
  +\iint\left(e\cdot\nabla\left(\tilde{b}_{k}\cdot\nabla u\right)\right)\left([e,\tilde{b}_{k}]\cdot\nabla u\right)\alpha\rho.
\end{align*}
Similarly, $I_2(e,k)$ integrating by parts and commuting $\tilde b_k$, $e$:
\begin{align*}
  I_{2}(e,k) 
  % =-\frac{1}{2}\iint\tilde{b}_{k}\cdot\nabla\left(\tilde{b}_{k}\cdot\nabla\left[(e\cdot\nabla u)^{2}\right]\right)\alpha\rho	\\
  % &=\frac{1}{2}\iint\left(\tilde{b}_{k}\cdot\nabla\left[(e\cdot\nabla u)^{2}\right]\right)\left(\tilde{b}_{k}\cdot\nabla\rho\right)\alpha\\
  % & =\iint\left(\tilde{b}_{k}\cdot\nabla\left(e\cdot\nabla u\right)\right)\left(e\cdot\nabla u\right)\left(\tilde{b}_{k}\cdot\nabla\rho\right)\alpha\\
  & \!=\!\iint\!\left(e\cdot\nabla\left(\tilde{b}_{k}\cdot\nabla u\right)\right)\left(e\cdot\nabla u\right)\left(\tilde{b}_{k}\cdot\nabla\rho\right)\alpha-\!\iint\!\left([e,\tilde{b}_{k}]\cdot\nabla u\right)\left(e\cdot\nabla u\right)\left(\tilde{b}_{k}\cdot\nabla\rho\right)\alpha.
\end{align*}
Note that the second to last term cancels with one of $I_1(e,k)$, yielding
\begin{align*}
  I_{1}(e,k) & +I_{2}(e,k) 
  =-\iint\left(e\cdot\nabla\left(\tilde{b}_{k}\cdot\nabla u\right)\right)^{2}\alpha\rho
  -\iint\left([e,\tilde{b}_{k}]\cdot\nabla u\right)\left(e\cdot\nabla u\right)\left(\tilde{b}_{k}\cdot\nabla\rho\right)\alpha	\\
  & +\iint\left([e,\tilde{b}_{k}]\cdot\nabla\left(\tilde{b}_{k}\cdot\nabla u\right)\right)\left(e\cdot\nabla u\right)\alpha\rho
  +\iint\left(e\cdot\nabla\left(\tilde{b}_{k}\cdot\nabla u\right)\right)\left([e,\tilde{b}_{k}]\cdot\nabla u\right)\alpha\rho.
\end{align*}
Now, to compute (\ref{eq:decomp_I1_I2}) we must set $e=e_i$ and sum in $i$. Using that $[\tilde e_i,\tilde b_k]=0$ for all $i,k$, we have
\begin{align}
  \nonumber
   \frac{1}{4}\sum_{i}\big[I_{1}(\tilde{e}_{i},k)+I_{2}(\tilde{e}_{i},k) +I_{1} &(\hat{e}_{i},k)+I_{2}(\hat{e}_{i},k)\big] 
    =-\frac{1}{2}\iint\left|\nabla\left(\tilde{b}_{k}\cdot\nabla u\right)\right|^{2}\alpha\rho\\
  &-\frac{1}{4}\sum_{i=1}^{3}\iint\left([\hat{e}_{i},\tilde{b}_{k}]\cdot\nabla u\right)\left(\hat{e}_{i}\cdot\nabla u\right)\left(\tilde{b}_{k}\cdot\nabla\rho\right)\alpha \label{eq:vanish1}\\
  & +\frac{1}{4}\sum_{i=1}^{3}\iint\left([\hat{e}_{i},\tilde{b}_{k}]\cdot\nabla\left(\tilde{b}_{k}\cdot\nabla u\right)\right)\left(\hat{e}_{i}\cdot\nabla u\right)\alpha\rho \label{eq:vanish2}\\
  &+\frac{1}{4}\sum_{i=1}^{3}\iint\left(\hat{e}_{i}\cdot\nabla\left(\tilde{b}_{k}\cdot\nabla u\right)\right)\left([\hat{e}_{i},\tilde{b}_{k}]\cdot\nabla u\right)\alpha\rho \label{eq:vanish3}.
\end{align}
Now observe that \eqref{eq:vanish1} is symmetric in $\hat e_i$, $[\hat e_i,\tilde b_k]$. Since the table of commutators is antisymmetric, we deduce that actually \eqref{eq:vanish1} vanishes. 
% As an example, if we set $k=1$ and sum in $i$ we obtain
% %
% \begin{equation*}
%   -\iint\Big(\overbrace{\left[\hat{e}_{2},\tilde{b}_{1}\right]}^{2\hat{e}_{3}}\cdot\nabla u\Big)\left(\hat{e}_{2}\cdot\nabla u\right)\left(\tilde{b}_{1}\cdot\nabla\rho\right)\alpha
%   -\iint\Big(\overbrace{\left[\hat{e}_{3},\tilde{b}_{1}\right]}^{-2\hat{e}_{2}}\cdot\nabla u\Big)\left(\hat{e}_{3}\cdot\nabla u\right)\left(\tilde{b}_{1}\cdot\nabla\rho\right)\alpha=0,
% \end{equation*}
%
A similar reasoning shows that \eqref{eq:vanish2} and \eqref{eq:vanish3} vanish. Plugging all this information into (\ref{eq:decomp_I1_I2}) shows that
\begin{align}\label{eq:deriv_w2_final}
  \frac{d}{dt}\mathcal T (f,g) 
  =& 	-\frac{1}{2}\sum_{k=1}^{3}\int_{0}^{1}\iint\left|\nabla\left(\tilde{b}_{k}\cdot\nabla u\right)\right|^{2}\alpha\rho\,dv\,dw\,ds\\
  & +\frac{1}{2}\sum_{k=1}^{3} \int_0^1\iint\frac{\alpha'}{r}\left(\tilde{b}_{k}\cdot\nabla\left(\tilde{b}_{k}\cdot\nabla u\right)\right)\left(\hat{n}\cdot\nabla u\right)\rho\,dv\,dw\,ds.
  \nonumber
\end{align}
This concludes the proof. 
 
\end{proof}

With these preliminary computations for $\varepsilon =0$, we are ready to prove Theorem~\ref{thm:w2_dissipation}: 
\begin{proof} \par
  \noindent \textit{Step 1.} We show that the operator 
  \begin{equation*}
    Q(u):=\left(\textnormal{div}_{v}-\textnormal{div}_{w}\right)\left[\left|v-w\right|^{2}K(v-w)\left(\nabla_{v}u-\nabla_{w}u\right)\right]
  \end{equation*}
  commutes with the Laplacian $\Delta_{v,w}$, namely
  \begin{equation*}
    Q\Delta_{v,w}u=\Delta_{v,w}Qu.
  \end{equation*}
We use the vectors $\tilde{e}_{i}=\begin{pmatrix}e_{i}\\e_{i}\end{pmatrix}$, $\hat{e}_{i}=\begin{pmatrix}e_{i}\\-e_{i}\end{pmatrix}$ and 
  $\tilde{b}_{k}=\begin{pmatrix}b_{k}\\-b_{k}\end{pmatrix}$, noting that
  \begin{equation*}
    \Delta u=\frac{1}{2}\sum_{i}\tilde{e}_{i}\cdot\nabla\left(\tilde{e}_{i}\cdot\nabla u\right)+\frac{1}{2}\sum_{i}\hat{e}_{i}\cdot\nabla\left(\hat{e}_{i}\cdot\nabla u\right)
    ,\quad Qu=\sum_{k}\tilde{b}_{k}\cdot\nabla\left(\tilde{b}_{k}\cdot\nabla u\right).
  \end{equation*}
  We have 
  \begin{align}
    \Delta Qu & =\frac{1}{2}\sum_{i,k}\tilde{e}_{i}\cdot\nabla\left(\tilde{e}_{i}\cdot\nabla\left(\tilde{b}_{k}\cdot\nabla\left(\tilde{b}_{k}\cdot\nabla u\right)\right)\right)
    +\frac{1}{2}\sum_{i,k}\hat{e}_{i}\cdot\nabla\left(\hat{e}_{i}\cdot\nabla\left(\tilde{b}_{k}\cdot\nabla\left(\tilde{b}_{k}\cdot\nabla u\right)\right)\right)\label{eq:Laplacian_Q}\\
    Q\Delta u & =\frac{1}{2}\sum_{i,k}\tilde{b}_{k}\cdot\nabla\left(\tilde{b}_{k}\cdot\nabla\left(\tilde{e}_{i}\cdot\nabla\left(\tilde{e}_{i}\cdot\nabla u\right)\right)\right)
    +\frac{1}{2}\sum_{i,k}\tilde{b}_{k}\cdot\nabla\left(\tilde{b}_{k}\cdot\nabla\left(\hat{e}_{i}\cdot\nabla\left(\hat{e}_{i}\cdot\nabla u\right)\right)\right).\label{eq:Q_Laplacian}
  \end{align}
  Therefore, we may begin by finding an expression for the commutator
  \begin{align*}
    \left[L_{e},L_{b}\right]u :& =L_{e}L_{u}u-L_{b}L_{e}u	\\
    & =e\cdot\nabla\left(e\cdot\nabla\left(b\cdot\nabla\left(b\cdot\nabla u\right)\right)\right)
    -b\cdot\nabla\left(b\cdot\nabla\left(e\cdot\nabla\left(e\cdot\nabla u\right)\right)\right),
  \end{align*}
  where $e$, $b$ are vectors in $\mathbb{R}^{3}$ which may depend on $v$ and $w$ and $L_{e}u:=e\cdot\nabla\left(e\cdot\nabla u\right)$, $L_{b}u:=b\cdot\nabla\left(b\cdot\nabla u\right)$. To do this we commute the order of differentiation to make the sequence $[e,b,b,e]$.
  \begin{align*}
    L_{e}L_{b}u 
    % & =e\cdot\nabla\left(e\cdot\nabla\left(b\cdot\nabla\left(b\cdot\nabla u\right)\right)\right)\\
    % & =e\cdot\nabla\left(b\cdot\nabla\left(e\cdot\nabla\left(b\cdot\nabla u\right)\right)\right)
    % +e\cdot\nabla\left([e,b]\cdot\nabla\left(b\cdot\nabla u\right)\right)\\
    & =e\cdot\nabla\left(b\cdot\nabla\left(b\cdot\nabla\left(e\cdot\nabla u\right)\right)\right)
    +e\cdot\nabla\left([e,b]\cdot\nabla\left(b\cdot\nabla u\right)\right)
    +e\cdot\nabla\left(b\cdot\nabla\left([e,b]\cdot\nabla u\right)\right).
  \end{align*}
  On the other hand, we have
  \begin{align*}
    L_{b}L_{u}u 
    % & =b\cdot\nabla\left(b\cdot\nabla\left(e\cdot\nabla\left(e\cdot\nabla u\right)\right)\right)\\
    % & =b\cdot\nabla\left(e\cdot\nabla\left(b\cdot\nabla\left(e\cdot\nabla u\right)\right)\right)
    % -b\cdot\nabla\left([e,b]\cdot\nabla(e\cdot\nabla u)\right)\\
    & =e\cdot\nabla\left(b\cdot\nabla\left(b\cdot\nabla\left(e\cdot\nabla u\right)\right)\right)
    -b\cdot\nabla\left([e,b]\cdot\nabla(e\cdot\nabla u)\right)
    -[e,b]\cdot\nabla\left(b\cdot\nabla\left(e\cdot\nabla u\right)\right).
  \end{align*}
  From here we deduce the expression
  \begin{align*}
    \left[L_{e},L_{b}\right]u & =e\cdot\nabla\left(b\cdot\nabla\left([e,b]\cdot\nabla u\right)\right)
    +[e,b]\cdot\nabla\left(b\cdot\nabla\left(e\cdot\nabla u\right)\right)\\
    & +b\cdot\nabla\left([e,b]\cdot\nabla(e\cdot\nabla u)\right)
    +e\cdot\nabla\left([e,b]\cdot\nabla\left(b\cdot\nabla u\right)\right).
  \end{align*}
  To notice cancellations we rewrite the expressions to be symmetric in $e$ and $[e,b]$. Commuting:
  % % 
  % \begin{align*}
  %   e\cdot\nabla\left([e,b]\cdot\nabla\left(b\cdot\nabla u\right)\right) & 
  %   =e\cdot\nabla\left(b\cdot\nabla\left([e,b]\cdot\nabla u\right)\right)
  %   +e\cdot\nabla\left(\left[[e,b],b\right]\cdot\nabla u\right)\\
  %   & =b\cdot\nabla\left(e\cdot\nabla\left([e,b]\cdot\nabla u\right)\right)
  %   +e\cdot\nabla\left(\left[[e,b],b\right]\cdot\nabla u\right)
  %   +\left[e,b\right]\cdot\nabla\left([e,b]\cdot\nabla u\right).
  % \end{align*}
  % % 
  % This leaves us with
  % 
  \begin{align*}
    \left[L_{e},L_{b}\right]u & 
    =e\cdot\nabla\left(b\cdot\nabla\left([e,b]\cdot\nabla u\right)\right)
    +[e,b]\cdot\nabla\left(b\cdot\nabla\left(e\cdot\nabla u\right)\right)\\
    & +b\cdot\nabla\left([e,b]\cdot\nabla(e\cdot\nabla u)\right)
    +b\cdot\nabla\left(e\cdot\nabla\left([e,b]\cdot\nabla u\right)\right)\\
    & +e\cdot\nabla\left(\left[[e,b],b\right]\cdot\nabla u\right)
    +\left[e,b\right]\cdot\nabla\left([e,b]\cdot\nabla u\right).
  \end{align*}
  Plugging this into (\ref{eq:Laplacian_Q}, \ref{eq:Q_Laplacian}) and using that $[\tilde{e}_{i},\tilde{b}_{k}]=0$ yields
  \begin{align}
    \Delta Qu-Q\Delta u & 
    =\frac{1}{2}\sum_{i,k}\left[L_{\hat{e}_{i}},L_{\tilde{b}_{k}}\right]u\nonumber \\
    & =\frac{1}{2}\sum_{i,k}\Big(\hat{e}_{i}\cdot\nabla\left(\tilde{b}_{k}\cdot\nabla\left([\hat{e}_{i},\tilde{b}_{k}]\cdot\nabla u\right)\right)
    +[\hat{e}_{i},\tilde{b}_{k}]\cdot\nabla\left(\tilde{b}_{k}\cdot\nabla\left(\hat{e}_{i}\cdot\nabla u\right)\right)\Big)\label{eq:vanish_EOT1}\\
    & +\frac{1}{2}\sum_{i,k}\Big(\tilde{b}_{k}\cdot\nabla\left([\hat{e}_{i},\tilde b_k]\cdot\nabla(\hat{e}_{i}\cdot\nabla u)\right)
    +\tilde{b}_{k}\cdot\nabla\left(\hat{e}_{i}\cdot\nabla\left([\hat{e}_{i},\tilde{b}_{k}]\cdot\nabla u\right)\right)\Big)\label{eq:vanish_EOT2}\\
    & +\frac{1}{2}\sum_{i,k}\Big(\hat{e}_{i}\cdot\nabla\left(\left[[\hat{e}_{i},\tilde{b}_{k}],\tilde{b}_{k}\right]\cdot\nabla u\right)
    +[\hat{e}_{i},\tilde{b}_{k}]\cdot\nabla\left([\hat{e}_{i},\tilde{b}_{k}]\cdot\nabla u\right)\Big).\label{eq:vanish_EOT3}
  \end{align}
  Expressions \eqref{eq:vanish_EOT1} and \eqref{eq:vanish_EOT2} vanish after summing in $i$. The third expression (\ref{eq:vanish_EOT3}) vanishes in a similar manner. For example, observe that for $k=1$ we have
  \begin{equation*}
    \left[[\hat{e}_{2},\tilde{b}_{1}],\tilde{b}_{1}\right]=2[\hat{e}_{2},\tilde{b}_{1}]=
    -4\hat{e}_{2},\quad\left[[\hat{e}_{3},\tilde{b}_{1}],\tilde{b}_{1}\right]=
    -2[\hat{e}_{3},\tilde{b}_{1}]=-4\hat{e}_{3},
  \end{equation*}
  which shows that with $k=1$ fixed
  \begin{equation*}
    (\ref{eq:vanish_EOT3})=
    -2\hat{e}_{2}\cdot\nabla\left(\hat{e}_{2}\cdot\nabla u\right)+2\hat{e}_{2}\cdot\left(\hat{e}_{2}\cdot\nabla u\right)
    -2\hat{e}_{3}\cdot\nabla\left(\hat{e}_{3}\cdot\nabla u\right)+2\hat{e}_{3}\cdot\left(\hat{e}_{3}\cdot\nabla u\right)=0.
  \end{equation*}
  A similar cancellation happens for other values of $k$, finishing the proof of {\em{Step 1}}.
\vspace{0.3cm}

\noindent\textit{Step 2.} Now we proceed with the proof of expression (\ref{eq:w2_dissipation2}) for the dissipation of entropic optimal transport. We use the following dual formulation 
\begin{equation*}
  \mathcal{T}_{\varepsilon}(f,g)=\varepsilon \mathcal H(f)+\sup\left\{ \int u_{1}g-\int u_{0}f:\partial_{s}u+\frac{1}{2}\left|\nabla u\right|^{2}+\frac{\varepsilon}{2}\Delta u=0\right\} ,
\end{equation*}
where $\mathcal H$ denotes the usual entropy. Moreover, the optimal $u$ provides an interpolation between $f$ and $g$ through the equation
\begin{equation}\label{eq:bHJB}
  \partial\rho_{s}+\textnormal{div}\left(\rho\nabla u\right)=\frac{\varepsilon}{2}\Delta\rho,\quad\rho_{0}=f,\quad\rho_{1}=g.
\end{equation}
The interpolant $\rho$ can be written explicitly in terms of the Schrödinger potentials and the heat kernel, although we make no use of this relation here. See \cite{gigli2020benamou} for more details. Now, if $\partial_tf=q(f)$, $\partial_tg=q(g)$ are solutions of the Landau equation for $\alpha \equiv 1$ then, thanks to Proposition \ref{prop:equality_W2_production}, we have
\begin{align*}
    2\frac{d}{dt}\left[\mathcal T_{\varepsilon}(f,g)\right] & =\left\langle \mathcal T_{\varepsilon}'(F,G),\left(Q(F),Q(G)\right)\right\rangle \\
    & =\varepsilon\frac{d}{dt}\mathcal H(F)+\iint u_{1}Q(G)-\iint u_{0}Q(F)\\
    & =\varepsilon\frac{d}{dt}\mathcal H(F)+\iint Q(u_{1})G-\iint Q(u_{0})F.
\end{align*}
Consider the interpolation between $F$ and $G$ of the form (\ref{eq:bHJB}) and denote it $\rho(v,w,s)$, $0\leq s\leq1$. Then we have
\begin{align*}
    2\frac{d}{dt} & \mathcal T_{\varepsilon} (f,g)  
    = -\varepsilon D(F) +\iint Q(u_{1})G
    -\iint Q(u_{0})F\\
    & =\varepsilon\frac{d}{dt}\mathcal H(F)+ \int_{0}^{1}\iint Q(\partial_{s}u)\rho+Q(u)\partial_{s}\rho\,ds\\
    & =\varepsilon\frac{d}{dt}\mathcal H(F)+ \int_{0}^{1}\iint\left[\nabla Q(u)\cdot\nabla u-\frac{1}{2}Q(\left|\nabla u\right|^{2})\right]\rho\,ds
    +\frac{\varepsilon}{2}\int_{0}^{1}\iint\left[\Delta Qu-Q\Delta u\right]\rho\,ds\\
    & =\varepsilon\frac{d}{dt}\mathcal H(F)-\sum_{k}\int_{0}^{1}\iint\left|\nabla\left(\tilde{b}_{k}\cdot\nabla u\right)\right|^{2}\rho\,ds,
\end{align*}
yielding the expression for the dissipation (\ref{eq:w2_dissipation2}) and finishing the proof of Theorem \ref{thm:w2_dissipation}.

\end{proof}

\section{Proof of Theorem \ref{thm:w2_estimate_non_Coulomb}}\label{sec:proof_w2_estimate_non_Coulomb}
% Using the dissipation of optimal transport from Theorem \ref{thm:w2_dissipation}, we now prove the contractive estimates for the Landau equation with soft potentials.
We now prove the the contractive estimates for soft potentials using the previously derived expression of $\frac{d}{dt}\mathcal T(f,g)$ in Lemma \ref{deriv_Wass2_eps=0}.

\begin{proof}[Proof of Theorem \ref{thm:w2_estimate_non_Coulomb}]
The starting point is the expression for the dissipation of  $\mathcal{T}$ found in Lemma \ref{deriv_Wass2_eps=0}. 
%We first show the contractive estimates for $\alpha(r)=r^{\gamma}$, $-3\le \gamma <0$. 
We first consider the second term and apply Young's inequality. We get %in the error term of \eqref{eq:deriv_w2_final} to obtain
\begin{align*}
  \frac{1}{2}\iint\frac{\alpha'}{r}\left(\tilde{b}_{k}\cdot\nabla\left(\tilde{b}_{k}\cdot\nabla u\right)\right)\left(\hat{n}\cdot\nabla u\right)\rho\,dv\,dw 
  \le &  \; \frac{1}{4}\iint\left(\tilde{b}_{k}\cdot\nabla\left(\tilde{b}_{k}\cdot\nabla u\right)\right)^{2}\frac{\alpha}{r^{2}}\rho\,dv\,dw	\\
  &+\frac{1}{4}\iint\frac{(\alpha')^2}{\alpha}\left|\hat n \cdot\nabla u\right|^{2}\rho\,dv\,dw.
\end{align*}

To bound the first term, we make use of the following identity (see \cite{GS24}) for functions ${G:\mathbb{R}^6\longrightarrow \mathbb{R}}$:
\begin{equation*}
  \left|\nabla G\right|^{2}=\frac{1}{2}\sum_{i}\left(\tilde{e}_{i}\cdot\nabla G\right)^{2}+\frac{\left(\hat{n}\cdot\nabla G\right)^{2}}{2r^{2}}+\sum_{k}\frac{\left(\tilde{b}_{k}\cdot\nabla G\right)^{2}}{2r^{2}}.
\end{equation*}
In particular, this shows that
\begin{equation*}
  \frac{1}{4}\int_{0}^{1}\iint\left(\tilde{b}_{k}\cdot\nabla\left(\tilde{b}_{k}\cdot\nabla u\right)\right)^{2}\frac{\alpha}{r^{2}}\rho\,dv\,dw\,ds\leq \frac{1}{2}\int_{0}^{1}\iint\left|\nabla\left(\tilde{b}_{k}\cdot\nabla u\right)\right|^{2}\alpha\rho\,dv\,dw\,ds,
\end{equation*}

Plugging both estimates in \eqref{eq:deriv_w2_final} and denoting by
$$K:=\sup_{r>0}\frac{\alpha'(r)^2}{\alpha(r)r^{\gamma-2}}$$
we get 
\begin{equation*}
  \frac{d}{dt}\mathcal{T}(f,g)\leq\frac{3K}{4}\int_{0}^{1}\iint |v-w|^{\gamma-2}\left|\hat n\cdot\nabla u\right|^{2}\rho\,dv\,dw\,ds.
\end{equation*}
It is now  necessary to introduce $\rho(v,w)\equiv\rho(v)\rho(w)$, $u(v,w)\equiv u(v)+u(w)$, which yields the expression
\begin{align*}
  \frac{d}{dt}\mathcal{T}(f,g) & \leq\frac{3K}{4}\int_{0}^{1}\iint\left|v-w\right|^{\gamma-2}\left|(v-w)\cdot\left(\nabla u(v)-\nabla u(w)\right)\right|^{2}\rho(v)\rho(w)\,dv\,dw\,ds	\\
  & \leq\frac{3K}{4}\int_{0}^{1}\iint\left|v-w\right|^{\gamma}\left|\nabla u(v)-\nabla u(w)\right|^{2}\rho(v)\rho(w)\,dv\,dw\,ds.
\end{align*}
We now focus on the case $-3<\gamma<0$, and we simply bound  
$$\left|\nabla u(v)-\nabla u(w)\right|^{2}\leq2\left(\left|\nabla u(v)\right|^{2}+\left|\nabla u(w)\right|^{2}\right)$$
and reverse the role of $v$, $w$ in one of the terms to obtain
\begin{align*}
  \frac{d}{dt}\mathcal{T}(f,g) & \leq3K\int_{0}^{1}\iint\left|v-w\right|^{\gamma}\left|\nabla u(v)\right|^{2}\rho(v)\rho(w)\,dv\,dw\,ds	\\
  & =3K\int_{0}^{1}\int\left|\nabla u(v)\right|^{2}\rho(v)\bigg(\underbrace{\int\rho(w)\left|v-w\right|^{\gamma}\,dw}_{:=(I_{\gamma}\rho)(v)}\bigg)\,dv\\
  & \leq3K\left\Vert I_{\gamma}\rho\right\Vert _{L^{\infty}\left(\mathbb{R}^{3}\times[0,1]\right)}\mathcal{T}(f,g).
\end{align*}
Next, we bound the singular integral $I_{\gamma}\rho$ by splitting the integral in a ball of radius $R$ and its complement:
\begin{align*}
  (I_{\gamma}\rho)(v) & =\int_{B_{R}(v)}\left|v-w\right|^{\gamma}\rho(w)\,dw+\int_{\mathbb{R}^{d}\setminus B_{R}(v)}\left|v-w\right|^{\gamma}\rho(w)\,dw	\\
  & \leq\left(\int_{B_{R}}\left|w\right|^{\gamma p'}\rho(w)\,dw\right)^{\frac{1}{p'}}\left\Vert \rho\right\Vert _{L^{p}(\mathbb{R}^{3})}+R^{\gamma}\\
  & =\left(\frac{4\pi(p-1)}{p(3+\gamma)-3}\right)^{1-\frac{1}{p}}R^{\gamma+\frac{3}{p'}}\left\Vert \rho\right\Vert _{L^{p}(\mathbb{R}^{3})}+R^{\gamma},
\end{align*}
where $p>\frac{3}{3+\gamma}$. By choosing $R=\left\Vert \rho\right\Vert _{L^{p}}^{-\frac{p'}{3}}$ we obtain the following bound:
\begin{equation*}
  \left\Vert I_{\gamma}\rho\right\Vert _{L^{\infty}\left(\mathbb{R}^{3}\times[0,1]\right)}\leq C_{\gamma,p}\left\Vert \rho\right\Vert _{L^{p}(\mathbb{R}^{3})}^{\beta_{p,\gamma}},\quad C_{\gamma,p}=\left(\frac{4\pi(p-1)}{p(3+\gamma)-3}\right)^{1-\frac{1}{p}}+1,\quad\beta_{p,\gamma}=-\frac{\gamma p}{3(p-1)}.
\end{equation*}
Recall that the functional $\rho\mapsto\int\rho^{p}\,dv$ is convex with respect to the $2$-Wasserstein distance, see \cite{mccann1997convexity}. Interpolation yields:
  \begin{align*}
     \left\Vert \rho_{s}\right\Vert _{L^{p}(\mathbb{R}^{3})}=\left(\int\rho_{s}(v)^{p}\,dv\right)^{\frac{1}{p}} &
     \leq\left((1-s)\int f(v)^{p}\,dv+s\int g(v)^{p}\,dv\right)^{\frac{1}{p}}	\\
    & \leq(1-s)^{\frac{1}{p}}\left\Vert f\right\Vert _{L^{p}(\mathbb{R}^{3})}+s^{\frac{1}{p}}\left\Vert g\right\Vert _{L^{p}(\mathbb{R}^{3})}\leq\left\Vert f\right\Vert _{L^{p}(\mathbb{R}^{3})}+\left\Vert g\right\Vert _{L^{p}(\mathbb{R}^{3})}.
  \end{align*}
In summary, we have shown that for $\alpha(r)=r^{\gamma}$, $-3<\gamma <0$, and $p>\frac{3}{3+\gamma}$, there exists $C_{\gamma,p}>0$ and $0<\alpha_{\gamma,p}<1$ such that
\begin{equation*}
  \frac{d}{dt}\mathcal{T}(f_t,g_t) \leq C_{\gamma,p}K \left(\left\Vert f_{t}\right\Vert _{L^{p}(\mathbb{R}^{3})}^{\beta_{\gamma,p}}+\left\Vert g_{t}\right\Vert _{L^{p}(\mathbb{R}^{3})}^{\beta_{\gamma,p}}\right) \mathcal{T}(f_t,g_t).
\end{equation*}
This proves Theorem \ref{thm:w2_estimate_non_Coulomb} for  $-3<\gamma<0$.

Next, we study the Coulomb case $\gamma = -3$, which requires a special adaptation of the previous computation since $|v-w|^{-3}$ is not locally integrable. We need to use the regularity of $u$, which can be expressed through the Hopf-Lax formula as
  \begin{equation}\label{eq:Hopf_lax}
    u_{s}(v)=-\frac{1}{s}\phi_{s}^{*}(v)+\frac{|v|^{2}}{2s},\qquad\phi_{s}(v)=s\phi(v)+(1-s)\frac{|v|^{2}}{2}.
  \end{equation}
  Since $\phi^{*}$ is convex, this shows $D^{2}u_{s}\leq\frac{1}{s}\textnormal{Id}$. On the other hand, it is easy to see that $\tilde{u}(s,v)=-u(1-s,v)$ also satisfies a similar inequality. In fact, we have the bounds
  \begin{equation*}
    -\frac{1}{1-s}\textnormal{Id}\leq D^{2}u_{s}\leq\frac{1}{s}\textnormal{Id},
  \end{equation*}
  which implies that $\left|\nabla u(v)-\nabla u(w)\right|\leq\frac{1}{\min\left(s,1-s\right)}\left|v-w\right|$.
  We consider a parameter $0<\varepsilon<\frac{1}{2}$ to be optimized and note that $\frac{d}{dt}\mathcal{T}(f_t,g_t)$ is bounded by
\begin{equation*}
  \frac{3K}{4}\int_{0}^{1}\left[\min\left(s,1-s\right)\right]^{-2\varepsilon}\iint\left|v-w\right|^{-3+2\varepsilon}\left|\nabla u(v)-\nabla u(w)\right|^{2(1-\varepsilon)}\rho(v)\rho(w)\, dv\,dw\,ds.
\end{equation*}
%
%
% \begin{align*}
%   \frac{d}{dt} & \mathcal{T}(f_t,g_t)	\\
%   & \leq \frac{3K}{4}\int_{0}^{1}\left[\min\left(s,1-s\right)\right]^{-2\varepsilon}\iint\left|v-w\right|^{-3+2\varepsilon}\left|\nabla u(v)-\nabla u(w)\right|^{2(1-\varepsilon)}\rho(v)\rho(w)\,dv\,dw\,ds.
% \end{align*}
%
Using the inequality $\left(\left|\nabla u(v)\right|+\left|\nabla u(w)\right|\right)^{2(1-\varepsilon)}\leq2^{1-2\varepsilon}\left(\left|\nabla u(v)\right|^{2(1-\varepsilon)}+\left|\nabla u(w)\right|^{2(1-\varepsilon)}\right)$ and swapping the roles of $v$, $w$ in the second resulting term, we see that
\begin{align*}
  \frac{d}{dt}\mathcal{T}(f_t,g_t) & \leq3K\int_{0}^{1}\left[\min\left(s,1-s\right)\right]^{-2\varepsilon}\iint\left|v-w\right|^{-3+2\varepsilon}\left|\nabla u(v)\right|^{2(1-\varepsilon)}\rho(v)\rho(w)\,dv\,dw\,ds	\\
  & =3K\int_{0}^{1}\left[\min\left(s,1-s\right)\right]^{-2\varepsilon}\int\left|\nabla u(v)\right|^{2(1-\varepsilon)}\rho(v)(I_{-3+2\varepsilon}\rho)(v)\,dv\,ds,
\end{align*}
where $(I_\gamma\rho)(v)=\int\rho(w)\left|v-w\right|^{\gamma}\,dw$ as before. This yields
\begin{align*}
  \frac{d}{dt}\mathcal{T}(f_t,g_t) &
  \leq3K\left\Vert I_{-3+2\varepsilon}\rho\right\Vert _{L^{\infty}\left(\mathbb{R}^{3}\times[0,1]\right)}\int_{0}^{1}\left[\min\left(s,1-s\right)\right]^{-2\varepsilon}\int\left|\nabla u(v)\right|^{2(1-\varepsilon)}\rho(v)\,dv\,ds \\
  & \leq3K\left\Vert I_{-3+2\varepsilon}\rho\right\Vert _{L^{\infty}\left(\mathbb{R}^{3}\times[0,1]\right)}\int_{0}^{1}\left[\min\left(s,1-s\right)\right]^{-2\varepsilon}\left(\int\left|\nabla u(v)\right|^{2}\rho(v)\,dv\right)^{1-\varepsilon}\,ds.
\end{align*}
Next, we use that $(\rho,u)$ are the constant-speed geodesic between $f$ and $g$ in the $2$-Wasserstein distance. In particular, this means that $\int\left|\nabla u_{s}(v)\right|^{2}\rho_{s}(v)\,dv$ is independent of $s$ and equal to $\mathcal T(f,g)$. Then, using that $\int_{0}^{1}\left[\min\left(s,1-s\right)\right]^{-2\varepsilon}\,ds=\frac{4^{1-\varepsilon}}{1-2\varepsilon}$, we are left with
\begin{equation}\label{eq:bound_with_eps}
  \frac{d}{dt}\mathcal T(f,g)\leq\frac{12K}{1-2\varepsilon}\left\Vert I_{-3+2\varepsilon}\rho\right\Vert _{L^{\infty}}\mathcal T(f,g)^{1-\varepsilon}.
\end{equation}
Finally, we need to optimize in $\varepsilon$. As a first step, we carefully bound the singular integral with respect to $\varepsilon$. Given $R>0$ we have
\begin{align*}
  (I_{-3+2\varepsilon}\rho)(v) & =\int_{B_{R}(v)}\frac{\rho(w)}{\left|v-w\right|^{3-2\varepsilon}}\,dw+\int_{\mathbb{R}^{3}\setminus B_{R}(v)}\frac{\rho(w)}{\left|v-w\right|^{3-2\varepsilon}}\,dw	\\
  &\leq\left\Vert \rho\right\Vert _{L^{\infty}(\mathbb{R}^{3})}\int_{B_{R}}\frac{dw}{\left|w\right|^{3-2\varepsilon}}+R^{-3+2\varepsilon} =\frac{2\pi}{\varepsilon}\left\Vert \rho\right\Vert _{L^{\infty}(\mathbb{R}^{3})}R^{2\varepsilon}+R^{-3+2\varepsilon}.
\end{align*}
Setting $R=\left\Vert \rho\right\Vert _{L^{\infty}(\mathbb{R}^{3})}^{-\frac{1}{3}}\varepsilon^{\frac{1}{3}}$ yields that
\begin{equation}
  \left\Vert I_{-3+2\varepsilon}\rho\right\Vert _{L^{\infty}(\mathbb{R}^{d})}\leq\left(2\pi+1\right)\left\Vert \rho\right\Vert _{L^{\infty}(\mathbb{R}^{3})}^{1-\frac{2\varepsilon}{3}}\varepsilon^{-1+\frac{2\varepsilon}{3}}\leq\left(2\pi+1\right)\max\left(\left\Vert \rho\right\Vert _{L^{\infty}(\mathbb{R}^{3})},1\right)\varepsilon^{-1}. \label{form*}
\end{equation}
If we plug (\ref{form*}) in (\ref{eq:bound_with_eps})  and assume that $0<\varepsilon<\frac{1}{4}$ we obtain
\begin{equation*}
  \frac{d}{dt} \mathcal T(f,g)\leq 24(2\pi+1)K\max\left(\left\Vert f\right\Vert _{L^{\infty}(\mathbb{R}^{3})}+\left\Vert g\right\Vert _{L^{\infty}(\mathbb{R}^{3})},1\right)\varepsilon^{-1}\left[\mathcal T(f,g)\right]^{1-\varepsilon}.
\end{equation*}
We choose $\varepsilon$ that minimizes $\varepsilon^{-1}\mathcal T(f,g)^{1-\varepsilon}$: 
\begin{equation*}
  \varepsilon=
  \begin{cases}
    \frac{-1}{\log \mathcal T(f,g)}, & \textnormal{if } \mathcal T(f,g)\leq e^{-4}\\
    \frac{1}{4}, & \textnormal{if } \mathcal T(f,g)\geq e^{-4}
    \end{cases}.
\end{equation*}
In the case $\mathcal T(f,g)\leq e^{-4}$, the bound is given by
\begin{equation*}
  \frac{d}{dt}\mathcal T(f,g)\leq-24(2\pi+1)K\max\left(\left\Vert f\right\Vert _{L^{\infty}(\mathbb{R}^{3})}+\left\Vert g\right\Vert _{L^{\infty}(\mathbb{R}^{3})},1\right)\mathcal T(f,g)\log \mathcal T(f,g),
\end{equation*}
and the proof is finished by solving the ode. %The solution $\mathcal T(f,g)$ is unique by Osgood uniqueness criteria.
Moreover $\mathcal T(f,g)=0$ if $f_{in} = g_{in}$ from Osgood uniqueness criteria.
\end{proof}

%%%%%%%%%%%%%%%%%%%%%%%%%%%%%%%%%%%%%%%%%%%%%%%%%%%%%%%%%%%%%%%%%%%%%%%%%%%
%%%%%%%%%%%%%%%%%%%%%%%%%%%%%%%%%%%%%%%%%%%%%%%%%%%%%%%%%%%%%%%%%%%%%%%%%%%
\section*{Acknowledgments}
MGD is partially supported by NSF Grant  DMS-2205937. MPG is partially supported by NSF Grant DMS-2206677 and DMS-2511625. MT is supported by NSF grant
DMS-2206187.  The authors would like to thank Nestor Guillen, Jun Kitagawa, and Luis Silvestre for the enlightening discussions.% and encouragement. 
% %Bibliography
\bibliographystyle{unsrt}  
\bibliography{references}

\end{document}